\documentclass[11pt]{amsart}
\usepackage{amsmath, amsthm, amscd, amssymb, amsfonts, amsxtra, amssymb, latexsym}
\usepackage{enumerate}
\usepackage{verbatim}
\usepackage{float}
\usepackage{graphicx,epsfig,tikz}
\usepackage{hyperref}
\usepackage{marginnote}
\hypersetup{colorlinks = true,	allcolors  = blue}

\hoffset -1,25cm \voffset -.75cm \textwidth 15cm \textheight 22cm
\pagestyle{headings}
\setlength{\parskip}{0.15cm}
\setlength{\parindent}{0.5cm}
\setlength{\itemindent}{0.25cm}
\setlength{\itemsep}{0.25cm}
\newcommand{\na}{\mathbb{N}}
\newcommand{\z}{\mathbb{Z}}

\newcommand{\sg}{\mathbb{S}}

\newcommand{\rr}{\mathcal{L}}
\newcommand{\ff}{\mathbb{F}}
\newcommand{\pl}{\mathbb{P}}

\newcommand{\cod}{\mathcal{C}}


\newcommand{\sop}{\operatorname{Sup}}

\newcommand{\aut}{\operatorname{Aut}}
\newcommand{\gal}{\operatorname{Gal}}
\newcommand{\divisor}{\operatorname{Div}}

\newcommand{\Char}{\mathrm{char}}
\newcommand{\pgl}{\mathrm{PGL}}

\newcommand{\sk}{\smallskip}
\newcommand{\msk}{\medskip}
\newcommand{\bsk}{\bigskip}

\newtheorem{thm}{Theorem}[section]
\newtheorem{prop}[thm]{Proposition}
\newtheorem{lem}[thm]{Lemma}
\newtheorem{coro}[thm]{Corollary}
\theoremstyle{definition}
\newtheorem{rem}[thm]{Remark}
\newtheorem{exam}[thm]{Example}
\newtheorem{defi}[thm]{Definition}

\theoremstyle{remark}

\setcounter{page}{1}

\begin{document}
\numberwithin{equation}{section}
\title[Cyclic AG-codes]{On cyclic algebraic-geometry codes}
\author[G. Caba\~na, M. Chara, R. Podest\'a, R. Toledano]{Gustavo Caba\~na, Mar\'ia Chara, Ricardo Podest\'a, Ricardo Toledano}
\dedicatory{\today}
\keywords{Cyclic codes, AG-codes, algebraic function fields, asymptotic goodness, towers.}
\thanks{2010 {\it Mathematics Subject Classification.} Primary 94B27;\,Secondary 94B15, 20B25.}
\thanks{Partially supported by CONICET, FONCyT, SECyT-UNC, UNL CAI+D 2020}
\address{Gustavo Caba\~na -- UNL - CONICET, (3000) Santa Fe, Argentina. {\it E-mail: cabanagusti@gmail.com}}
\address{Mar\'ia Chara -- UNL - CONICET, (3000) Santa Fe, Argentina. {\it E-mail: mchara@santafe-conicet.gov.ar}}
\address{Ricardo Podest\'a -- CIEM-CONICET, FaMAF, Universidad Nacional de C\'ordoba, Av.\@ Medina Allende 2144, Ciudad Universitaria, (5000) C\'ordoba, Argentina. {\it E-mail: podesta@famaf.unc.edu.ar}}
\address{Ricardo Toledano -- UNL, Departamento de Matem\'atica, Facultad de Ingenier\'ia Qu\'imica, Santiago del Estero 2829, (3000) Santa Fe, Argentina.  {\it E-mail: ridatole@gmail.com}}

\begin{abstract}
In this paper we initiate the study of cyclic algebraic geometry codes. We give conditions to construct cyclic algebraic geometry codes in the context of algebraic function fields over a finite field by using their group of automorphisms. 
We  prove that cyclic algebraic geometry codes constructed in this way are closely related to cyclic extensions. We also give a detailed study of the monomial equivalence of cyclic algebraic geometry codes constructed with our method in the case of a rational function field. 
\end{abstract}

\maketitle

\section{Introduction}
\subsubsection*{Motivation}
Let $\ff_q$ be a finite field of $q$ elements.
A linear code of length $n$ and dimension $k$   over $\ff_q$ is simply an $\ff_q$-linear subspace $\cod $ of $\ff_q^n$ of dimension $\dim \cod = k$. In this case it is customary to say that $\cod$ is an $[n,k]$-code (over $\ff_q$) and the elements of $\cod$ are called codewords. An important parameter to consider in an $[n,k]$-code $\cod$ is its minimum Hamming distance
$d(\cod)=\min\{ w(c) : 0\neq c\in \cod\}$,
where $w(c)$ is the weight of $c\in \cod$, the number of non zero coordinates of $c$. An $[n,k,d]$-code $\cod$ is just an $[n,k]$-code $\cod$ such that $d=d(\cod)$.

Among the classical linear codes over $\ff_q$, the family of cyclic codes have shown to be one of the most important and widely used because of their good parameters, excellent detection-correction capabilities and fast and efficient encoding-decoding algorithms. Let us recall that a code $\cod$ is \textit{cyclic} if
it is closed under cyclic permutations of the coordinates of its codewords. That is,
for any $c=(c_1,c_2,\ldots,c_n)\in \cod$, the cyclic shift
{$s(c)=s(c_1,c_2, \ldots,c_n) = (c_2,\ldots, c_n, c_1)$} is also in $\cod$.
There is a natural action of the symmetric group  $\sg_n$ 
on $\ff_q^n$ defined as
$\tau (a_1,\ldots,a_n) = (a_{\tau(1)},\ldots,a_{\tau(n)})$
for $(a_1,\ldots,a_n)\in \ff_q^n$, $\tau\in \sg_n$.
This action  defines the so called permutation  automorphism group $\mathrm{PAut}(\cod)$ of $\cod$ as the subgroup of $\sg_n$ preserving $\cod$, that is
$$\mathrm{PAut}(\cod) = \{\tau\in \sg_n : \tau (\cod) = \cod\}.$$
We see at once that a code $\cod$ is cyclic if and only if the $n$-cycle $\sigma=(12 \cdots n) \in \mathrm{PAut}(\cod)$. Clearly the above mentioned  cyclic shift $s$ corresponds to the $n$-cycle $\sigma=(12 \cdots n)$. It is worth to mention that the family of cyclic codes contains  important codes  such as Golay codes, binary Hamming codes, Reed-Solomon codes and BCH codes.

A major breakthrough in coding theory was given by Goppa at the beginning of the 80's 
when he introduced a whole new family of linear codes obtained by evaluation of rational functions on rational points of an irreducible and smooth projective  curve over $\ff_q$. These codes are known today as algebraic geometry codes (or AG-codes for short). Using  Goppa's ideas and modular curves over $\ff_q$, Tsfasmann et al (\cite{TVZ82}) constructed a family of codes which surpassed the Gilbert-Varshamov bound for the very first time. Thus, it seems natural to consider cyclic algebraic geometry codes because, in this way, we will have the above mentioned advantages of cyclic codes combined with the conceptual richness  involved in the construction of AG-codes.

Following the book \cite{Sti}, we will use the language of function fields to describe the construction of AG-codes.  Let $F$ be a function field over $\ff_q$, that is, $F$ is a finite field extension of a rational function field $\ff_q(x)$. Let $D=P_1+\cdots +P_n$ and $G$ be disjoint divisors of $F$, where $P_1,\ldots,P_n$ are different rational (degree one) places of $F$.
The AG-code defined by $F$, $D$ and $G$ is
\begin{equation}\label{C(DG)}
 C_{\rr}(D,G) = \big \{ \big( z(P_1),z(P_2),\ldots,z(P_n) \big) \in \ff_q^n : z\in \rr(G) \big\},
\end{equation}
where $z(P_i)$ stands for the residue class of $z$ modulo $P_i$ and
\begin{equation} \label{lG}
\rr(G) = \{z\in F^* : (z) \ge -G\} \cup \{0\},
\end{equation}
is the Riemann-Roch space associated to $G$, with $F^*=F\smallsetminus \{0\}$.
 
If we want to emphasize the dependence of $C_{\rr}(D,G)$ on $F$, we will say that $C_{\rr}(D,G)$ is \textit{defined over} $F$. It is also customary to say that an AG-code $C_{\rr}(D,G)$ is an  $n$-point AG-code when the support of the divisor $G$ has $n$ different places of $F$. Some other standard references for AG-codes are the books \cite{MMR}, \cite{Mo}, \cite{Ste} and \cite{TVN}.

Using a clever idea, Pellikaan et al (\cite{PSW}) proved that any linear code $\cod$, in particular any cyclic code, can be represented as an AG-code as in \eqref{C(DG)}, although the proof is not constructive. That is, there is an algebraic function field $F$ over $\ff_q$ 
and disjoint divisors $D,G$ of $F$ such that $\cod = C_{\mathcal{L}}(D,G)$.
In this way, there is an interesting relation between the classical algebraic construction of codes and the new geometric one. See for instance \cite{BHHW}, for a brief survey on the historical transition from classical algebraic codes to AG-codes. On the other hand, Stichtenoth has shown that every cyclic code can be  realized as the trace code of generalized AG-codes defined over rational function fields (see Section 9.2 in \cite{Sti} for details). However it is not clear how to construct  cyclic AG-codes in a systematic way. 

Once a construction method for cyclic AG-codes is found, an important question to answer is the following one: how many inequivalent (in the sense of monomial equivalence of linear codes as given in Definition \ref{defi}) cyclic AG-codes can be constructed?  Answering this last question may have interesting consequences in the problem of constructing sequences of cyclic AG-codes by using asymptotically good towers of function fields. Let us recall that the question of whether or not the family of cyclic codes is asymptotically good is still open and one way of constructing asymptotically good sequences of codes over $\ff_q$ is by using asymptotically good towers of function fields over $\ff_q$ (see for instance \cite{CPT20} and \cite{Sti06}).

When dealing with the construction of cyclic AG-codes the following basic question arises: what does the condition of cyclicity look like for an AG-code?  By definition,
$C_{\rr}(D,G)$ is cyclic if 
\[s(c) = \big(u(P_2),\ldots,u(P_{n}),u(P_{1})\big) \in C_{\rr}(D,G),\] for any $c=(u(P_1),\ldots,u(P_n))\in C_{\rr}(D,G)$ where $u\in \rr(G)$. This happens if and only if there exists $v\in \rr(G)$ such that
$s(c) = (v(P_1), v(P_2),\ldots, v(P_n))$. Hence, $C_{\rr}(D,G)$ is cyclic if for every $u \in \rr(G)$ there exists $v\in \rr(G)$ such that $v(P_i)=u(P_{i+1\mod n})$ for $1\leq i\leq n$, that is
\begin{equation} \label{AGcyc conds}
\left\{
\begin{array}{rcl}
v(P_1) &=& u(P_2), \\
v(P_2) &=& u(P_3),\\ 
&\vdots& \\
v(P_{n-1})&=& u(P_n) , \\
v(P_n)&=& u(P_{1}).
\end{array} \right.
\end{equation}

Thus the construction problem of  cyclic AG-codes boils down to answer the following questions: how and when do we find  an element $v\in \rr(G)$ satisfying \eqref{AGcyc conds}? One way of looking for an element $v\in \rr(G)$ solving \eqref{AGcyc conds} is by considering the group $\aut_{\ff_q}(F)$ of $\ff_q$-automorphisms of the function field $F$, that is the group of automorphisms of $F$ fixing $\ff_q$ pointwise (see Section \ref{sigma}).

The main goal of this work is to initiate the study of cyclic AG-codes which are constructed by using the group $\aut_{\ff_q}(F)$. In particular, we will study in detail the problem of monomial equivalence of    cyclic AG-codes over a rational function field $\ff_q(x)$ which are constructed by using the group $\aut_{\ff_q}(\ff_q(x))$ (see Section \ref{sigmarational}).  

\subsubsection*{Outline and results}
A brief summary of the paper is as follows.
In Section 2, we recall some basic facts about AG-codes defined over a function field $F$ over $\ff_q$ and its $\ff_q$-automorphism group $\aut_{\ff_q}(F)$. We then present a method, which will be called the {\it sigma-method}, to construct cyclic AG-codes $C_\rr(D,G)$, based on the action of $\sigma \in \aut_{\ff_q}(F)$ on the rational places in the support of $D$.  

In Section 3 we present some examples of cyclic AG-codes using the sigma-method in cyclic extensions of function fields. Section 4  is devoted to the construction of what we call sigma-cyclic rational codes, that is  cyclic AG-codes over the rational function field $F=\ff_q(x)$ defined by the sigma-method with the group $\aut_{\ff_q}(\ff_q(x))$.  We also consider the question of the monomial equivalence of sigma-cyclic rational codes of the form $C_\rr(D,rP)$ where $P$ is a rational place of $\ff_q(x)$. As an interesting consequence of this study, we prove (see Theorem \ref{mainequality}) one of the main results of this work: for a given length and dimension there is, up to monomial equivalence, only one sigma-cyclic rational code over $\ff_q(x)$ which can be chosen to be of the form $C_\rr(D,rP_\infty)$, where $P_\infty$ denotes the only pole of $x$ in $\ff_q(x)$.

Finally in Section 5 we study  the structural properties of the sigma-method in an arbitrary function field $F$ over  $\ff_q$. The main result here is given in Theorem \ref{ext cic} where we prove that the sigma-method with the group $\aut_{\ff_q}(F)$ is related to the existence of a subfield $E$ of $F$ such that $F/E$ is a cyclic extension. A precise description of the ramification of the involved places is also given.

\section{Cyclicity condition for AG-codes and automorphisms}  \label{sigma}
In this section we first recall some basic facts on AG-codes  over a function field $F$, the automorphism group of an AG-code and the group of automorphisms of $F$. Then we consider the cyclicity condition \eqref{AGcyc conds} for AG-codes in terms of automorphisms of $F$.

\subsubsection*{AG-codes}
Throughout this work $\ff_q$ will always be algebraically closed in any considered function field $F$ over $\ff_q$, i.e.\@ $\ff_q$ is the full constant field of $F$. 
  These type of extensions of function fields are called {\em geometric}.

From now on, we will denote by $\mathbb{P}(F)$ the set of places of a function field $F$ over $\ff_q$ and by $\mathbb{P}_1(F)$ the subset of rational places. Also, $\divisor (F)$ stands for the group of divisors of $F$ and $\sop(D)$ for the support
of $D\in \divisor (F)$.

\sk
Consider an AG-code $C_\rr(D,G)$ defined over a  function field $F$ over $\ff_q$ of genus $g$ as in \eqref{C(DG)}.
Denote by $N(F) = \#\{P\in \mathbb{P}(F) : \deg P=1\} = \# \mathbb{P}_1(F)$
the number of rational places of $F$ as usual. It is well known that $C_\rr(D,G)$ is an $[n,k,d]$-code with
$$n\le N(F) \le q+1+g[2\sqrt q],$$
by Serre's improvement of the Hasse-Weil bound,
\begin{equation} \label{dist}
d \ge  n-\deg G,
\end{equation}
and $k=\ell(G)-\ell(G-D)$ where
$\ell(G) =\dim_{\ff_q} \rr(G)$. 
If $\deg G <\deg D=n$, then \linebreak $\rr(G-D)=0$ and hence, by the Riemann-Roch theorem, we have
\begin{equation} \label{kG}
k=\ell(G) \ge \deg G + 1 -g,
\end{equation}
with equality if $\deg G \ge 2g-1$. This and the Singleton bound imply 
$$n+1-g\le k+d \le n+1.$$

\subsubsection*{Automorphisms and permutation automorphisms}
Let $F$ be a function field over $\ff_q$. If $E$ is a function field extension of $F$, we will denote as usual by $Q\,|\,P$ the fact that $Q$ is a place in $E$ over a place $P$ in $F$, that is $P=Q\cap F$.
 
 We have the following basic result about places in finite extensions of function fields and isomorphisms that can be proved using similar arguments to the ones given in the Galois case (see for instance Lemma 3.5.2 and Theorem 3.7.1 in \cite{Sti}).

\begin{lem} \label{lema sigma}
	   Let $F$ be a  function field over $\ff_q$ and let $F'$ and $E'$ be extensions of $F$. Suppose 
  $\sigma:F'\rightarrow E'$ is an isomorphism and put $E=\sigma(F)$.
Then the following holds.

\noindent $(a)$  If $Q \in \mathbb{P}(F')$  then $\sigma(Q) = \{ \sigma(z) : z \in Q\} \in \mathbb{P}(E')$
		         and $\sigma(\mathcal{O}_Q) = \mathcal{O}_{\sigma(Q)}$.

\noindent $(b)$  If  $0\neq z'\in E'$ then $v_{\sigma(Q)}(z')=v_Q(\sigma^{-1}(z'))$.

\noindent $(c)$  Let $Q \in \mathbb{P}(F')$ and $P \in \mathbb{P}(F)$ such that $Q \,|\, P$. 
Then, $\sigma(Q) \,|\, \sigma(P)$ and also 
$$e(\sigma(Q) \,|\, \sigma(P)) = e(Q \,|\, P) \qquad \text{and} \qquad 
f(\sigma(Q) \, | \, \sigma(P))=f(Q \, | \, P),$$ 
where $e$ is the ramification index and $f$ is the inertia degree.
		
\noindent $(d)$ The action of $\sigma:F' \rightarrow E'$ on $\pl(F')$ is naturally extended to an action on $\divisor(F')$ by linearity. 

\noindent $(e)$ If $F'/F$ is Galois, then $G=\gal(F'/F)$ acts transitively on the set of places of $F'$, i.e.\@ for each pair of places $P,Q \in \pl(F')$ such that $P\cap F=Q\cap F$
there exists an element $\sigma\in G$ such that $\sigma(P)=Q$. 
\end{lem}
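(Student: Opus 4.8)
The plan is to treat $(a)$–$(d)$ as formal transport-of-structure statements under the field isomorphism $\sigma$, identifying each place $Q \in \pl(F')$ with the maximal ideal of its valuation ring $\vr_Q$, and to reserve the real work for the transitivity in $(e)$. Recall that a subring $\vr$ with $\ff_q \subseteq \vr \subsetneq F'$ is a valuation ring of $F'/\ff_q$ exactly when $z \in \vr$ or $z^{-1} \in \vr$ for every $z \in F'^{*}$.

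For $(a)$, since $\sigma$ is a ring isomorphism carrying the constant field $\ff_q$ of $F'$ onto that of $E'$, the image $\sigma(\vr_Q)$ again satisfies the valuation-ring axioms: given $z' \in E'^{*}$, apply the dichotomy to $\sigma^{-1}(z') \in F'$ and push forward. Being the image of a valuation ring, $\sigma(\vr_Q)$ has maximal ideal $\sigma(Q)$, which is precisely the assertion $\sigma(Q) \in \pl(E')$ and $\sigma(\vr_Q) = \vr_{\sigma(Q)}$. Part $(b)$ follows by transporting a uniformizer: if $t$ is a local parameter at $Q$ then $\sigma(t)$ is one at $\sigma(Q)$, and writing $\sigma^{-1}(z') = u\,t^{m}$ with $u$ a unit of $\vr_Q$ and $m = v_Q(\sigma^{-1}(z'))$ yields $z' = \sigma(u)\,\sigma(t)^{m}$ with $\sigma(u)$ a unit of $\vr_{\sigma(Q)}$, so $v_{\sigma(Q)}(z') = m$.

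For $(c)$, the relation $Q \mid P$ means $P = Q \cap F$, and applying the bijection $\sigma$ gives $\sigma(P) = \sigma(Q) \cap \sigma(F) = \sigma(Q) \cap E$, i.e.\@ $\sigma(Q) \mid \sigma(P)$. The invariance of $e$ and $f$ is read off from $(b)$: for $z \in F$ one has $v_{\sigma(Q)}(\sigma(z)) = v_Q(z)$ and $v_{\sigma(P)}(\sigma(z)) = v_P(z)$, so comparing $v_Q(z) = e(Q\mid P)\,v_P(z)$ with $v_{\sigma(Q)}(\sigma(z)) = e(\sigma(Q)\mid\sigma(P))\,v_{\sigma(P)}(\sigma(z))$ on a uniformizer at $P$ forces $e(\sigma(Q)\mid\sigma(P)) = e(Q\mid P)$; and $\sigma$ induces an isomorphism of residue fields $\vr_Q/Q \to \vr_{\sigma(Q)}/\sigma(Q)$ compatible with the subfields $\vr_P/P$ and $\vr_{\sigma(P)}/\sigma(P)$, whence $f(\sigma(Q)\mid\sigma(P)) = f(Q\mid P)$. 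Part $(d)$ is then immediate: by $(a)$ the map $\sigma$ permutes places, so $\sigma\big(\sum_Q n_Q Q\big) := \sum_Q n_Q\,\sigma(Q)$ defines a homomorphism $\divisor(F') \to \divisor(E')$, which is an action when $E' = F'$.

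The crux is $(e)$, which I expect to be the main obstacle since it genuinely uses the Approximation Theorem and a norm argument rather than mere transport of structure. Fix $P_1, P_2 \in \pl(F')$ lying over the same place $P$ of $F$ and assume, for contradiction, that $\sigma(P_1) \neq P_2$ for all $\sigma \in G = \gal(F'/F)$, so the finite orbit $\{\sigma(P_1) : \sigma \in G\}$ avoids $P_2$. By the Approximation Theorem choose $z \in F'$ with $v_{P_2}(z) > 0$, with $v_{\sigma(P_1)}(z) = 0$ for all $\sigma \in G$, and with $v_{P'}(z) \ge 0$ for every place $P' \mid P$. Set $w := \prod_{\sigma \in G} \sigma(z)$, which lies in $F$ as it is $G$-invariant. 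Using the identity $v_{P_i}(\sigma(z)) = v_{\sigma^{-1}(P_i)}(z)$ from $(b)$: since $\sigma^{-1}(P_1)$ runs through the orbit of $P_1$, every term vanishes and $v_{P_1}(w) = 0$, whereas $v_{P_2}(w) = \sum_\sigma v_{\sigma^{-1}(P_2)}(z)$ has a strictly positive term (at $\sigma = \mathrm{id}$) and no negative term, so $v_{P_2}(w) > 0$. But $w \in F$ forces $v_{P_i}(w) = e(P_i \mid P)\,v_P(w)$, so $v_{P_1}(w) = 0$ gives $v_P(w) = 0$ and hence $v_{P_2}(w) = 0$, a contradiction. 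The delicate point, where the Galois hypothesis is essential, is imposing the three simultaneous local conditions so that no term of $v_{P_2}(w)$ is negative while the whole $P_1$-orbit is killed; this is exactly what the Approximation Theorem provides.
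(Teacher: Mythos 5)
Your proposal is correct and follows exactly the route the paper points to: the paper itself gives no proof of this lemma, deferring to the standard arguments of Lemma 3.5.2 and Theorem 3.7.1 in Stichtenoth's book, and your transport-of-structure treatment of $(a)$--$(d)$ together with the Approximation Theorem plus norm-product argument for $(e)$ (with the three local conditions $v_{P_2}(z)>0$, $v_{\sigma(P_1)}(z)=0$ on the orbit, and $v_{P'}(z)\ge 0$ at all places over $P$) is precisely that standard proof. No gaps.
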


It is a basic result that $\aut_{\ff_q}(F)$ is a finite group.  Now from ($a$) of Lemma \ref{lema sigma}, we have that $\aut_{\ff_q}(F)$ acts naturally on the group $\mathrm{Div}(F)$ of divisors of $F$ by defining,
\[\sigma \big( \sum_P a_{P}P \big)  = \sum_P a_P \, \sigma(P),\]
for each $\sigma\in \aut_{\ff_q}(F)$. 
Thus, for any AG-code $\cod=C_{\rr}(D,G)$ defined over $F$ with
$D=P_1+\cdots+P_n$, where each $P_i$ is a rational place of $F$,
and for any $\sigma \in \aut_{\ff_q}(F)$
we have a well defined AG-code $\cod^\sigma =C_{\rr}(\sigma(D),\sigma(G))$.

Now, suppose that there is an element $\sigma \in  \aut_{\ff_q}(F)$ fixing $D$ and $G$, i.e.\@
\begin{equation}\label{invariantag}
\sigma(D)=D \qquad \text{and} \qquad \sigma(G)=G. 
\end{equation}
Hence, $\cod^\sigma=\cod$.
Condition $\sigma(G)=G$ guarantees that  
\begin{equation}\label{slG}
z\in \rr(G) \quad \Leftrightarrow \quad \sigma^{-1}(z)\in\rr(G).
\end{equation}
The direct implication follows directly from ($b$) of Lemma \ref{lema sigma} and the fact that $Q\in \sop(G)$ if and only if  $\sigma(Q)\in\sop(G)$. The converse implication follows from the fact that $\aut_{\ff_q}(F)$ is of finite order, say $m$,  so that $\sigma^{-m}(z)=z\in \rr(G)$.

From \eqref{slG} we see that if $\sigma\in \aut_{\ff_q}(F)$ satisfies \eqref{invariantag} and $(z(P_1),\ldots,z(P_n))$ is a codeword of $C_{\rr}(D,G)$, we get another codeword defined as 
$$\sigma \cdot \big( z(P_1),\ldots,z(P_n) \big) = \big( \big(\sigma^{-1}(z)\big)(P_1),\ldots,\big(\sigma^{-1}(z)\big)(P_n) \big).$$
The map  {$z(\sigma(P))\mapsto (\sigma^{-1}(z))(P)$} defines a field isomorphism between the residue fields
$\mathcal{O}_{\sigma(P)}/\sigma(P)$ and $\mathcal{O}_P/P$ for any $P\in \mathbb{P}(F)$ and $\sigma \in \aut_{\ff_q}(F)$. 
Hence we can consider that 
 {\begin{equation} \label{xP}
z(\sigma(P))=(\sigma^{-1}(z))(P). 
\end{equation}}
Furthermore, condition $\sigma(D)=D$ implies that the codeword
 $\sigma \cdot \big( z(P_1),\ldots,z(P_n) \big) $
represents a permutation of the coordinates of the codeword $(z(P_1),\ldots,z(P_n))$ of $C_{\rr}(D,G)$ for any $\sigma\in \aut_{\ff_q}(F)$.  

Thus, if $\sigma\in\aut_{\ff_q}(F)$ satisfies \eqref{invariantag} then we can think of 
$\sigma$ as an element of the permutation automorphism group $\mathrm{PAut}(C_{\rr}(D,G))$. 
In view of these observations it is natural to consider the group
\begin{equation} \label{autC}
\aut_{D,G}(F) = \{ \sigma\in \aut_{\ff_q}(F) \,:\, \sigma(D)=D \,\text{ and } \, \sigma(G)=G \}.
\end{equation}
It is shown in Proposition 8.2.3 of \cite{Sti} that if
 $n>2g+2$ then $\aut_{D,G}(F)$ can be viewed as a subgroup of $\mathrm{PAut}(C_\rr(D,G))$.

\subsubsection*{Automorphisms and cyclic AG-codes}
The next result will allow us to formulate a procedure to construct cyclic AG-codes. This procedure will be called the sigma-method and every cyclic AG-code considered in this work will be constructed with the sigma-method.

\begin{lem} \label{ej cic}
Let $P_1,\ldots,P_n$ be $n$ different rational places of a function field $F$ over $\ff_q$ and  $G$ be divisor of $F$ with disjoint support with respect to   $D=P_1+\cdots +P_n$. Suppose that there exists $\sigma \in \aut_{D,G}(F)$  such that 
 {\begin{equation} \label{conditions}
\sigma(P_1) = P_2,\: \ldots,\: \sigma(P_{n-1}) = P_{n}, \: \sigma(P_n) = P_1.
\end{equation}}
Then $C_{\rr}(D,G)$ is a cyclic AG-code, the order of $\sigma$ as an element of $\aut_{\ff_q}(F)$ is divisible by $n$ and also $n$ is the smallest positive integer satisfying $\sigma^n(P_1)=P_1$.
\end{lem}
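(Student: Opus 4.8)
The plan is to verify the three assertions in order, the first being the substantive one and the remaining two following from the observation that \eqref{conditions} forces $\sigma$ to act as an $n$-cycle on $\{P_1,\ldots,P_n\}$.

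For the cyclicity I would start from the characterization \eqref{AGcyc conds}: it suffices to produce, for each $u\in\rr(G)$, an element $v\in\rr(G)$ with $v(P_i)=u(P_{i+1\bmod n})$ for $1\le i\le n$. The natural candidate is $v=\sigma^{-1}(u)$. Since $\sigma\in\aut_{D,G}(F)$ we have $\sigma(G)=G$, so \eqref{slG} guarantees $v=\sigma^{-1}(u)\in\rr(G)$. To check the coordinate conditions I would apply the identification \eqref{xP}, namely $z(\sigma(P))=(\sigma^{-1}(z))(P)$, with $z=u$ and $P=P_i$: this gives $v(P_i)=(\sigma^{-1}(u))(P_i)=u(\sigma(P_i))$. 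Now \eqref{conditions} says precisely $\sigma(P_i)=P_{i+1\bmod n}$, whence $v(P_i)=u(P_{i+1\bmod n})$, as required. This establishes that $C_\rr(D,G)$ is cyclic.

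For the two order statements I would iterate \eqref{conditions}. An easy induction gives $\sigma^k(P_1)=P_{1+k\bmod n}$ for every $k\ge 0$, so the $\langle\sigma\rangle$-orbit of $P_1$ is exactly $\{P_1,\ldots,P_n\}$, a set of $n$ distinct places. In particular $\sigma^n(P_1)=P_1$, while for $0<k<n$ we have $\sigma^k(P_1)=P_{1+k}\neq P_1$ since the $P_i$ are pairwise distinct; this is exactly the claim that $n$ is the smallest positive integer with $\sigma^n(P_1)=P_1$. Finally, if $m$ denotes the order of $\sigma$ in $\aut_{\ff_q}(F)$, then $\sigma^m=\mathrm{id}$ forces $\sigma^m(P_1)=P_1$; writing $m=qn+r$ with $0\le r<n$ and using $\sigma^n(P_1)=P_1$ repeatedly yields $\sigma^r(P_1)=P_1$, so the minimality just proved forces $r=0$, i.e. $n\mid m$.

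I do not anticipate a serious obstacle. The only point requiring care is the bookkeeping of $\sigma$ versus $\sigma^{-1}$ in the cyclicity step: one must invoke \eqref{xP} and \eqref{slG} in the correct direction so that $v=\sigma^{-1}(u)$ (rather than $\sigma(u)$) produces the forward shift $u(P_{i+1\bmod n})$. The order statements are then purely a matter of the orbit–stabilizer principle applied to the finite cyclic group $\langle\sigma\rangle$ acting on the place set.
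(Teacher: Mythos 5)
Your proof is correct and follows essentially the same route as the paper's: cyclicity via $v=\sigma^{-1}(u)$ using \eqref{slG} and \eqref{xP}, and the divisibility claim via Euclidean division $m=qn+r$ reduced to the minimality of $n$. The only (immaterial) difference is that you establish the minimality of $n$ first, through the explicit orbit $\sigma^k(P_1)=P_{1+k\bmod n}$, and then deduce $n\mid m$, whereas the paper derives both from the same contradiction argument; the substance is identical.
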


\begin{proof}
We recall from \eqref{AGcyc conds}   
that $C_{\rr}(D,G)$ is cyclic if and only if for each $u\in \rr(G)$, there exists an element $v\in\rr(G)$ such that
\[v(P_i) = u(P_{i+1 \text{ mod } n}) \qquad 1\le i \le n.\]
Suppose now that $\sigma\in \aut_{D,G}(F)$ satisfies \eqref{conditions}. For each  
$u\in \rr(G)$ the element $v=\sigma^{-1}(u)$ belongs to $\rr(G)$ by \eqref{slG} and satisfies \eqref{AGcyc conds}. In fact, by \eqref{xP},
$$v(P_i) = \big(\sigma^{-1}(u)\big) (P_i) = u(\sigma(P_i)) = u(P_{i+1 \text{ mod } n})$$
for each $1\le i \le n$ and then $C_{\rr}(D,G)$ is a cyclic AG-code.

Now, let $m$ be the order of $\sigma$ in $\aut_{\ff_q}(F)$. Then $\sigma^m=id$ and thus we have that $\sigma^m(P)=P$ for any place $P$ of $F$. On the other hand, notice that 
\[\begin{array}{rcl}
	P_2 &=& \sigma(P_1),\\
	P_3 &=& \sigma^2(P_1),\\
 		& \vdots & \\
	P_n &=&\sigma^{n-1}(P_1),\\
	P_1 &=&\sigma^n(P_1). 
\end{array}\]
In particular we see that $\sigma^{nk}(P_1)=P_1$ for any $k\in \na$. If $m<n$ then $P_1=\sigma^m(P_1)=P_{m+1}$, so that $P_1\in \{P_2,\ldots,P_n\}$ contradicting the assumption that the places $P_1,\ldots,P_n$ are $n$ different places. Thus $m\geq n$ and then there exist unique integers $k\geq 1$ and $r\geq 0$ such that $m=kn+r$ where $r=0$ or $1\leq r\leq n-1$. If $r\neq 0$ then $1\leq r\leq n-1$ and so
\[P_1=\sigma^m(P_1)=\sigma^{r+kn}(P_1)=\sigma^r(\sigma^{nk}(P_1))=\sigma^r(P_1)=P_{r+1},\] 
so that $P_1\in \{P_2,\ldots,P_n\}$ which is a contradiction as we have already noted. Therefore $r=0$ and so we must have $m=kn$. Finally, it is clear from the above argument that we can not have $\sigma^j(P_1)=P_1$ for some positive integer $j<n$. 
\end{proof}

Roughly speaking, what Lemma \ref{ej cic} is saying that a cyclic shift of the places in $\sop (D) = \{ P_1,\ldots, P_n \}$ by some suitable element of $\aut_{D,G}(F)$ ensures that $C_{\rr}(D,G)$ is a cyclic AG-code.  

\begin{rem} \label{relabeling}
Clearly \eqref{conditions} represents a choice on the numbering of the indices of the $n$ rational places. Different numberings for the indices of the places $P_1,\ldots,P_n$ will give equivalent codes in the sense of the definition below.
\end{rem}

\begin{defi} \label{defi}
Two codes $\cod_1$ and $\cod_2$ over $\ff_q$ are called \emph{equivalent}, and we write $\cod_1\sim \cod_2$, if there exists a monomial matrix $M$ over $\ff_q$ such that $\cod_2=\cod_1 M$ (a monomial matrix over $\ff_q$ is a matrix such that in each row and column there is only one non zero element of $\ff_q$). In other words, $\cod_1\sim \cod_2$ if each codeword of $\cod_2$  can be obtained from the codewords of $\cod_1$ by a combination  of the following two operations:
($a$) permutation of the digits of a codeword;
($b$) multiplication of each entry of a codeword by a non-zero element of $\ff_q$ (not necessarily the same element for each entry).
\end{defi}

In view of Lemma \ref{ej cic} we have the following definition:
\begin{defi} 
Let $\cod=C_{\rr}(D,G)$ be an AG-code defined over a function field $F$ over $\ff_q$ with $D=P_1+\cdots+P_n$. We shall say that $\cod$ is \textit{sigma-cyclic} if it is cyclic and the permutation of the codewords is performed by an automorphism fixing $D$ and $G$, while permuting the places $P_i$; that is, if there exists an automorphism $\sigma \in \aut_{D,G}(F)$ such that \eqref{conditions} holds.  
\end{defi}

\subsubsection*{The sigma-method} 
We express now the condition \eqref{conditions} of Lemma \ref{ej cic} in a more structural way. Let $F$ be a function field over $\ff_q$ and let $\sigma\in \aut_{\ff_q}(F)$ be of order $m$. For a given place $P$ of $F$ we denote by $[P]_\sigma$ the orbit defined by the action of the cyclic subgroup $\langle \sigma\rangle$ of $\aut_{\ff_q}(F)$ generated by $\sigma$ on the set of places of $F$, that is 
$$ [P]_\sigma = \{\sigma(P), \sigma^2(P), \ldots, \sigma^m(P)=P \}.$$
If we consider the $n$ rational places $P_1,\ldots,P_n$ of Lemma \ref{ej cic} we see from its proof that $[P_1]_\sigma=\{P_1,\ldots,P_n\}$ where $\sigma\in \aut_{D,G}(F)$ satisfies \eqref{conditions}. Therefore we have that $m=nk$ where $k$ is the order of the isotropy group 
$$\langle\sigma\rangle_{P_1}=\{\sigma^i:\sigma^i(P_1)=P_1\}.$$ 
Notice also that from the proof of Lemma \ref{ej cic}, this group can be described more explicitly as $\langle\sigma\rangle_{P_1}=\{\sigma^{ik}\}_{i=1}^m$.

We describe now what we call the \textit{sigma-method} to construct sigma-cyclic AG-codes over a function field $F$ over $\ff_q$ which is, essentially, a reformulation of Lemma \ref{ej cic} in terms of orbits. 

\msk 

\centerline{\textsc{The sigma-method}} \msk \hrule
\begin{enumerate}[($a$)]
	\item Find $\sigma \in \aut_{\ff_q}(F)$ of order $m\geq 2$  and a divisor $G$ of $F$ such that $\sigma(G)=G$. \sk 
	
	\item Find a rational place $P$ of $F$ such that $P\notin\sop D$ and $\sigma(P)\neq P$ (if $\sigma(P)=P$ the orbit $[P]_\sigma$ is trivial, namely $[P]_\sigma=\{P\}$). \sk 
	
	\item Find the order $k$ of the isotropy group $\langle\sigma\rangle_P$.  \sk 

	\item  
	Let $n=m/k$. We have that $n$ is the smallest divisor of $m$ such $\sigma^n(P)=P$. Then  the places $P,\sigma(P),\sigma^2(P),\ldots,\sigma^{n-1}(P)$ are $n$ different rational places of $F$ and 
\begin{equation}\label{orbitsigmamethod}
	[P]_\sigma = \{P,\sigma(P),\sigma^2(P),\ldots,\sigma^{n-1}(P)\}.
\end{equation}

	\item If we define $D = P+\sigma(P)+\cdots+\sigma^{n-1}(P)$ we have that $D$ and $G$ are disjoint divisors. 
	
	\item $C_{\rr}(D,G)$ is a sigma-cyclic AG-code over $\ff_q$, because if we write $P_1=P$ and $P_{i+1}=\sigma^i(P)$ for $i=1,\ldots,n-1,$ then it is easy to check that $D=P_1+P_2+\cdots+P_n$ so that $D$ is a divisor of $F$ satisfying the conditions of Lemma~\ref{ej cic}.
\end{enumerate}
\hrule

\bsk

\begin{rem}
	The length of a sigma-cyclic code constructed with the sigma-method depends on the size of the orbit \eqref{orbitsigmamethod}. The determination of the size of the orbit \eqref{orbitsigmamethod}, which is equivalent to find either the order of the isotropy group $\langle\sigma\rangle_P$ or to  prove that $n$ is the smallest  divisor of $m$ such $\sigma^n(P)=P$, is one of the main difficulties to overcome when using this method.
\end{rem}

One of the most basic and favorable settings to use the sigma-method is the rational function field $\ff_q(x)$. This is so because not only the $\ff_q$-automorphism group of $\ff_q(x)$ is well known, but also because the rational places of $\ff_q(x)$ have a simple description and from \cite{LN99} we know the size of the orbits. These advantages will be fully exploited in Section~4.     
 
For the time being we can give a more explicit version of Lemma \ref{ej cic} in the case of a rational function field. This version will allow us to give some simple examples of sigma-cyclic AG-codes. We first fix some notation that will be used from now on. We will write $P_{\alpha}$ (resp.\@ $P_{\infty}$) to denote the place of $\ff_q(x)$ which is the only zero (resp.\@ pole) of $x-\alpha$ (resp.\@ $x$) in $\ff_q(x)$. It is well known (see, for instance, \cite{Sti})  that the places $P_\alpha$ with $\alpha\in \ff_q$ and $P_{\infty}$ are all the rational places of $\ff_q(x)$.

\begin{lem} \label{lemin ideals}
Let $\alpha_1,\ldots,\alpha_n,\beta \in \ff_q$ be all different. 
Suppose that  there exists an element $\sigma \in \aut(\ff_q(x))$  such that
\begin{equation} \label{cond ideals}
			\sigma(x-\alpha_i) \in P_{\alpha_{i+1}} \text{ for $i$ mod $n$}, \qquad 
			\sigma(x-\beta) \in P_{\beta}, \qquad \text{and} \qquad \sigma(x^{-1}) \in P_{\infty}.
	\end{equation} 
Then, the code $C_\rr(D,G)$ with 
	$$D=P_{\alpha_1}+\cdots+P_{\alpha_n} \qquad \text{ and } \qquad G=rP_\beta+sP_\infty, \quad r,s \in \z,$$ 
	is a sigma-cyclic AG-code of length $n$ over $\ff_q$. 
	Also, if $0< r + s <n$ then $C_{\rr}(D,G)$ is a non-trivial MDS code with $k=r+s+1$ and $d=n-(r+s)$.
\end{lem}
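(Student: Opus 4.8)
The plan is to verify the three assertions of Lemma \ref{lemin ideals} in turn: first that $C_\rr(D,G)$ is sigma-cyclic, then that it has length $n$, and finally the MDS parameters. The overall strategy is to show that the hypotheses \eqref{cond ideals} are simply the rational-function-field incarnation of the conditions \eqref{conditions} and $\sigma \in \aut_{D,G}(\ff_q(x))$ required by Lemma \ref{ej cic}, and then to invoke that lemma directly.

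First I would translate the membership conditions \eqref{cond ideals} into statements about how $\sigma$ permutes places. Recall that for $\sigma \in \aut(\ff_q(x))$, part $(a)$ of Lemma \ref{lema sigma} tells us $\sigma$ maps places to places, and that a place $Q$ equals the set of functions vanishing at it (its maximal ideal). The condition $\sigma(x-\alpha_i) \in P_{\alpha_{i+1}}$ says that the function $\sigma(x-\alpha_i)$ vanishes at $P_{\alpha_{i+1}}$; equivalently $x-\alpha_i$ vanishes at $\sigma^{-1}(P_{\alpha_{i+1}})$, which forces $\sigma^{-1}(P_{\alpha_{i+1}}) = P_{\alpha_i}$, i.e. $\sigma(P_{\alpha_i}) = P_{\alpha_{i+1}}$ for $i$ mod $n$. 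Similarly $\sigma(x-\beta) \in P_\beta$ gives $\sigma(P_\beta) = P_\beta$, and $\sigma(x^{-1}) \in P_\infty$ gives $\sigma(P_\infty) = P_\infty$, since $P_\infty$ is the unique pole of $x$ (equivalently the unique zero of $x^{-1}$). Thus $\sigma$ cyclically permutes $P_{\alpha_1}, \ldots, P_{\alpha_n}$ exactly as in \eqref{conditions} and fixes both $P_\beta$ and $P_\infty$.

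With these place relations in hand, I would check $\sigma \in \aut_{D,G}(\ff_q(x))$. Since $\sigma$ permutes the summands of $D = P_{\alpha_1} + \cdots + P_{\alpha_n}$ among themselves, we have $\sigma(D) = D$; and since $\sigma(P_\beta) = P_\beta$ and $\sigma(P_\infty) = P_\infty$, we get $\sigma(G) = \sigma(rP_\beta + sP_\infty) = rP_\beta + sP_\infty = G$. Because $\beta \neq \alpha_i$ for all $i$, the supports of $D$ and $G$ are disjoint. All hypotheses of Lemma \ref{ej cic} are therefore satisfied, so $C_\rr(D,G)$ is sigma-cyclic, and the lemma's conclusion also yields that $n$ is the true orbit size of $P_{\alpha_1}$, hence the length is exactly $n$.

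For the parameters, I would use the standard AG-code estimates recalled in the excerpt. Since $\ff_q(x)$ has genus $g = 0$, the divisor $G = rP_\beta + sP_\infty$ has $\deg G = r + s$, and the hypothesis $0 < r+s < n = \deg D$ gives $\deg G < n$, so $\rr(G-D) = 0$ and by \eqref{kG} the dimension is $k = \deg G + 1 - g = r + s + 1$ (equality holds because $\deg G = r+s \geq 2g-1 = -1$). The distance bound \eqref{dist} gives $d \geq n - \deg G = n - (r+s)$, and combining with the Singleton bound $k + d \leq n + 1$ forces $d = n - (r+s)$, so the code is MDS. The only genuinely delicate point is the translation in the first step --- correctly reading each membership condition as a place identity and getting the direction of $\sigma$ versus $\sigma^{-1}$ right --- but once that dictionary is set up, the rest follows mechanically from Lemma \ref{ej cic} and the genus-zero Riemann--Roch computation.
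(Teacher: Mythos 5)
Your proposal is correct and follows essentially the same route as the paper's own proof: you translate the membership conditions \eqref{cond ideals} into the place identities $\sigma(P_{\alpha_i})=P_{\alpha_{i+1}}$, $\sigma(P_\beta)=P_\beta$, $\sigma(P_\infty)=P_\infty$, conclude $\sigma(D)=D$ and $\sigma(G)=G$ with disjoint supports so that Lemma \ref{ej cic} applies, and then obtain the parameters from \eqref{dist} and \eqref{kG} together with the Singleton bound. The only difference is one of detail: you carefully spell out the steps the paper dismisses as ``clear'' and ``straightforward'' (the valuation-theoretic dictionary via Lemma \ref{lema sigma} and the genus-zero Riemann--Roch plus Singleton computation forcing $d=n-(r+s)$), which is a faithful expansion rather than a different argument.
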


\begin{proof}
It is clear that $D$ and $G$ are disjoint divisors. Also, \eqref{cond ideals} implies that  
$$\sigma(P_{\alpha_i})=P_{\alpha_{i+1}} (1\le i \le n-1), \quad \sigma(P_{\alpha_n})=P_{\alpha_{1}}, \quad \sigma(P_\beta)=P_\beta, \quad \text{and} \quad \sigma(P_\infty) = P_\infty.$$
Thus, $\sigma(D)=D$ and $\sigma(G)=G$. Therefore, the AG-code $C_\rr(D,G)$ is cyclic, by Lemma~\ref{ej cic}.
The assertions on the parameters are straightforward from \eqref{dist} and \eqref{kG}.
\end{proof}

\begin{rem} \label{n cycle}
	If the first condition in \eqref{cond ideals} is replaced by
	$\sigma(x-\alpha_i) \in P_{\alpha_{\tau(i)}}$, $1\le i\le n$,
	for some $n$-cycle $\tau \in \mathbb{S}_n$ different from  {$(12\cdots n)$}, we get an AG-code which is equivalent to a cyclic one
	by Remark \ref{relabeling}. 
\end{rem}

Next, we give some examples of rational sigma-cyclic AG-codes using factorization of polynomials.
\begin{exam}[Frobenius] \label{frobenius}
Let $F=\ff_{q}(x)$ with $q=p^n$, $p$ prime and $n\ge 2$ (otherwise the construction would be trivial). For any primitive element $\alpha \in \ff_{p^n}$ take
$$p(x)=m_\alpha(x) = (x-\alpha)(x-\alpha^p) \cdots (x-\alpha^{p^{n-1}}) .$$
Let $\sigma \in  \aut(F)$ be the automorphism determined by
	\begin{equation*} \label{tau 1}
	\sigma(x)=x \qquad \text{and} \qquad \sigma(a)=a^p, \quad a\in \ff_{p^n}
	\end{equation*}
and extended by linearity to the whole $F$.
Then $\sigma(P_0)=P_0$ and $\sigma(P_\infty)=P_\infty$.
Also, since $\alpha^{p^n}=\alpha$ and $\sigma(x-\alpha^{p^i}) =	x-\alpha^{p^{i+1}}$
we have that $\sigma(P_{\alpha_i})=P_{\alpha_{i+1}}$ where $\alpha_i=\alpha^{p^{i-1}}$ for every $i$ mod $n$. 
In this way, $\cod = C_\rr(D,G)$, where $D=P_1+\cdots +P_n$ and $G=rP_0+sP_\infty$ with $r,s\in \z$ such that $r+s<n$, is a cyclic AG-code of length $n$ over $\ff_{p^n}$ by Lemma \ref{lemin ideals}, with $d>0$ and $k\ge r+s-1$. \hfill $\lozenge$
\end{exam}

Since $\sigma \in \aut(\ff_q(x))$ restricted to $\ff_q$ is an $\ff_p$-automorphism, and $\gal(\ff_q/\ff_p)$ is a cyclic group generated by the Frobenius automorphism, then for any $\alpha\in \ff_q$ we have $\sigma(\alpha) = \alpha^{p^\ell}$ for some 
$1\le \ell \le n$. This says that the cyclic code $\cod$ constructed in Example~\ref{frobenius} is essentially the only possible one with $\sigma(x)=x$. Since $p$ is prime, changing $\sigma(\alpha)=\alpha^p$ by $\sigma(\alpha)= \alpha^{p^\ell}$ for any 
$1 < \ell < n$, we get cyclic AG-codes all equivalent to $\cod$.

\begin{exam}[Roots of unity] \label{ex omega code}
Let $q$ be a prime power and let  $n\geq 2$ be a divisor  of $q-1$. Then $\ff_q$ contains a primitive $n$-th root of unity $\omega$ and hence $x^n-1$ splits into linear factors
	$$x^n-1 = (x-1)(x-\omega)(x-\omega^2)\cdots (x-\omega^{n-1})$$
in $\ff_q[x]$.
This factorization gives rise to $n$ rational places $P_1,P_{\omega},\ldots, P_{\omega^{n-1}}$ of $\ff_q(x)$. 
Let $\sigma$ be the automorphism in $\aut_{\ff_q}(\ff_q(x))$ induced by 
\begin{equation} \label{tau omega}
		\sigma(x) = \omega^{-1} x \qquad \text{and} \qquad \sigma(a)=a, \quad a\in \ff_q.
\end{equation}
	
We clearly have that $\sigma(x) \in P_0$. Also, note that
	$$\sigma(x-\omega^i)= (\omega^{-1} x-\omega^i)= \omega^{-1} (x-\omega^{i+1})$$
and hence $\sigma(x-\omega^i) \in P_{\omega^{i+1}}$ for every $1\le i \le n$.
Therefore, taking $\alpha_i = \omega^{i-1}$ for any $i=1,\ldots,n$, we have that
$\sigma(P_{\alpha_i}) = P_{\alpha_{i+1}}$ for $i=1, \ldots,n$, $\sigma(P_0)=P_0$ and $\sigma(P_\infty)=P_\infty$. 
Therefore, $\cod = C_\rr(D,G)$ with $D = P_{\alpha_1} + \cdots + P_{\alpha_n}$, $G=rP_0 + sP_\infty$ and $r,s \in \z$,
is a $\sigma$-cyclic AG-code over $\ff_q(x)$ of length $n$ over $\ff_q$, by Lemma \ref{lemin ideals}.
\hfill $\lozenge$	
\end{exam}

\begin{exam}[Artin-Schreier polynomial] \label{ej 3}
Consider the polynomial $f(x)=x^p-x-a $ in $\ff_q[x]$, with $p=\mathrm{char}(\ff_q)$. 
Let $\alpha \in \ff_q \smallsetminus \ff_p$ and take $a=\alpha^p-\alpha$ (hence $a\ne 0$). Thus $\alpha$ is a root of $x^p-x-a$ and it is easy to see that $\alpha+1$ is also a root of $f(x)$. We clearly have
	$$x^p-x-a = (x-\alpha)\big(x-(\alpha+1)\big) \cdots \big(x-(\alpha+p-1)\big).$$ 
Consider the $\ff_q$-automorphism determined by 
	$$\sigma(x)=x-1.$$  
Taking $\alpha_i = \alpha+i-1$ for $i=1,\ldots, p$, we see that $\sigma(P_{\alpha_i})=P_{\alpha_{i+1}}$ for $i=1,\ldots,p$ and $\sigma(P_\infty)=P_\infty$.
Then, by Lemma \ref{ej cic}, the code $\cod=C_\rr(D,G)$ where $D=P_{\alpha_1}+\cdots+P_{\alpha_p}$ and $G=sP_\infty$ with 
$s\in \na$ 
is cyclic of prime length $p$. \hfill $\lozenge$
\end{exam}

We have given easy examples of cyclic rational AG-codes. In Section 4 we will study the construction of sigma-cyclic AG-codes over $\ff_q(x)$ in a more systematic way.

\section{Examples of cyclic AG-codes through cyclic extensions}
In the previous section we gave examples of cyclic AG-codes over the rational function field. 
Perhaps the simplest way to obtain concrete examples of sigma-cyclic AG-codes over function fields of positive genus is by considering a cyclic extension $F'/F$ of function fields over $\ff_q$ and the subgroup $\mathrm{Gal}(F'/F)$ of $\aut_{\ff_q}(F')$. In fact, we will show in Section 5 that sigma-cyclic AG-codes are basically obtained from cyclic extensions.

We begin by showing that  cyclic extensions are suitable to construct sigma-cyclic AG-codes with the sigma-method. 

\begin{prop}\label{sigmacyclic} 
Let $F'/F$ be a cyclic extension  of degree $m$ of function fields over $\ff_q$. Let $P$ be a place of $F$ and let $P_1,\ldots,P_n$ be all the places of $F'$ lying over $P$. Then, $n$ divides $m$  and for any generator $\sigma$ of $\gal(F'/F)$ we have that the orbit of $P_1$ is  
$$[P_1]_\sigma=\{P_1,\ldots,P_n\}.$$
Furthermore, let $Q \neq P$ be a place of $F$ and let $G=Q_1+\cdots+Q_k$ be the divisor of $F'$ formed with all the places of $F'$ lying over $Q$.  Then $\sigma(G)=G$ and $\sop (G) \cap \sop(D) = \varnothing$ where  $D=P_1+\cdots+P_n$. In particular, if each $P_i$ is rational then $C_\rr(D,G)$ is a sigma-cyclic AG-code. 
\end{prop}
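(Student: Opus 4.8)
The plan is to derive everything from the transitivity of the Galois action on the places lying over a fixed place, which is exactly part $(e)$ of Lemma \ref{lema sigma}. First I would observe that since $F'/F$ is Galois, $\gal(F'/F)$ acts transitively on $\{P_1,\ldots,P_n\}$, so these $n$ places form a single orbit under the full group. The crucial use of the cyclicity hypothesis enters here: because $F'/F$ is cyclic, $\gal(F'/F)=\langle\sigma\rangle$ for the chosen generator $\sigma$, and therefore the orbit of $P_1$ under the cyclic subgroup $\langle\sigma\rangle$ coincides with its orbit under all of $\gal(F'/F)$. This yields $[P_1]_\sigma=\{P_1,\ldots,P_n\}$ at once. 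Applying the orbit--stabilizer relation to $\langle\sigma\rangle$, which has order $m$, then gives $n=[\langle\sigma\rangle:\langle\sigma\rangle_{P_1}]$, whence $n\mid m$.

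Next I would verify the two conditions needed to invoke Lemma \ref{ej cic}, namely $\sigma(D)=D$ and $\sigma(G)=G$. Both follow from the same elementary principle: since $\sigma$ fixes $F$ pointwise, for any place $R$ of $F'$ one has $R\cap F=\sigma(R)\cap F$, so $\sigma$ permutes the set of places of $F'$ lying over any fixed place of $F$. Applying this to the fiber over $P$ gives $\sigma(D)=D$, and applying it to the fiber over $Q$ — of which $G=Q_1+\cdots+Q_k$ is by hypothesis the \emph{complete} sum — gives $\sigma(G)=G$. For the disjointness of supports I would simply note that each place of $F'$ lies over exactly one place of $F$, namely its contraction to $F$; since $P\neq Q$, no place of $F'$ can lie simultaneously over both, so $\sop(D)\cap\sop(G)=\varnothing$.

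Finally, assuming each $P_i$ is rational, I would assemble the pieces. The equality $[P_1]_\sigma=\{P_1,\ldots,P_n\}$ says that the iterates $P_1,\sigma(P_1),\ldots,\sigma^{n-1}(P_1)$ run through the $n$ distinct places over $P$ with $\sigma^n(P_1)=P_1$; relabeling $P_{i+1}:=\sigma^i(P_1)$ puts the data into the form \eqref{conditions}. Combined with $\sigma(D)=D$, $\sigma(G)=G$ and the disjointness just established, this exhibits $\sigma\in\aut_{D,G}(F')$ satisfying the hypotheses of Lemma \ref{ej cic}, so $C_\rr(D,G)$ is a sigma-cyclic AG-code.

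As for difficulty, there is no analytic obstacle: the whole argument is bookkeeping with the Galois action. The single load-bearing step — and the only place where cyclicity, as opposed to mere normality, is genuinely used — is the identification of the single-generator orbit $[P_1]_\sigma$ with the full fiber over $P$. If $\gal(F'/F)$ were merely Galois but not cyclic, a single generator need not exist and this identification could fail, so I would take care to emphasize that it is precisely $\langle\sigma\rangle=\gal(F'/F)$ that makes the sigma-method applicable.
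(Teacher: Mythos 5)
Your proof is correct, but it takes a genuinely different route from the paper's on the core step. To show $[P_1]_\sigma=\{P_1,\ldots,P_n\}$ and $n\mid m$, the paper argues via ramification theory: it first gets $n\mid m$ from the fundamental identity $ne\!f=m$ (all $e_i$, $f_i$ being equal since $F'/F$ is Galois), and then proves the $n$ iterates $\sigma(P_1),\ldots,\sigma^n(P_1)$ are pairwise distinct by a decomposition-group argument --- if $\sigma^k\in D(P_1\,|\,P)$ with $0<k<n$, then since $|D(P_1\,|\,P)|=e\!f$ one gets $\sigma^{ke\!f}=id$ with $ke\!f<ne\!f=m$, contradicting $|\sigma|=m$ --- and concludes orbit $=$ fiber by counting, without ever invoking transitivity. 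You instead cite transitivity directly (Lemma \ref{lema sigma}$(e)$), observe that cyclicity makes the orbit under $\langle\sigma\rangle$ coincide with the orbit under the full group $\gal(F'/F)=\langle\sigma\rangle$, and deduce $n\mid m$ from orbit--stabilizer. Your version is shorter and purely group-theoretic, avoiding $e$ and $f$ altogether; the paper's version is self-contained (it effectively re-proves transitivity on this fiber) and, as a by-product, identifies the stabilizer with the decomposition group of order $e\!f$, giving the finer relation $m=ne\!f$ that the authors reuse in Proposition \ref{sigmacyclic2} and Theorem \ref{ext cic}. One small step you assert rather than justify is that an orbit of size $n$ under a cyclic group is exactly $\{P_1,\sigma(P_1),\ldots,\sigma^{n-1}(P_1)\}$ with these distinct and $\sigma^n(P_1)=P_1$; this does follow in one line, since the stabilizer, being the unique index-$n$ subgroup of $\langle\sigma\rangle$, is $\langle\sigma^n\rangle$, so the least $j>0$ with $\sigma^j(P_1)=P_1$ is $j=n$ --- worth stating explicitly, as \eqref{conditions} is precisely what Lemma \ref{ej cic} consumes. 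The remaining verifications ($\sigma(D)=D$, $\sigma(G)=G$, disjointness of supports via uniqueness of the contraction $R\cap F$) match what the paper dismisses as clear by construction.
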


\begin{proof}
Suppose $F'/F$ is a cyclic extension of degree $m$ and let $\sigma$ be a generator of the Galois group $\mathcal{G}=\gal(F'/F)$. 
First, notice that $n \mid m$ because $ne \!f=m$ where $e=e(P_i \,|\, P)$ and $f=f(P_i \,|\, P)$ are the ramification index and the inertia degree, respectively, for $i=1,\ldots,n$. 
	
We now show that the orbit $[P_1]_\sigma$ consists of the places $P_1,\ldots,P_n$. Consider the decomposition group 
	$$  D(P_1 \,|\, P) = \{\sigma \in \mathcal{G} : \sigma(P_1)=P_1\}$$
of $P_1$ over $P$.	If $\sigma^i(P_1)=\sigma^j(P_1)$ for some $1\leq i<j\leq n$ then we have 
$\sigma^k\in D(P_1 \,|\, P)$ for $k=j-i>0$.
Since $D(P_1 \,|\, P)$ is a group of order $ef$  we have that $\sigma^{ke\!f}=id$, the identity element of $\gal(F'/F)$. But $k<n$ and so $ke\!f<ne\!f=m$ contradicting that $m$ is the order of $\sigma$. Therefore the set $\{\sigma^i(P_1)\}_{i=1}^n$ consists of $n$ different places of $F'$ lying over $P$ and thus we must have
	$$ \{\sigma(P_1), \sigma^2(P_1), \ldots, \sigma^n(P_1)\} = \{P_1,P_2,\ldots,P_n\} .$$
This means that $\sigma^j(P_1)=P_1$ for some $1\leq j\leq n$. But the above argument implies that we can not have $1\leq j\leq n-1$. Therefore $\sigma^n(P_1)=P_1$  and we are done with the first part.
	
It is clear by construction that $\sigma(G)=G$ and $\sop (G) \cap \sop(D) = \varnothing$. Therefore if each $P_i$ is rational
 then $\cod_\rr(D,G)$ is a sigma-cyclic AG-code.
\end{proof}

We now give some explicit constructions of sigma-cyclic AG-codes obtained from cyclic extensions, namely Kummer,  Artin-Schreier and Hermitian extensions.

\begin{exam}[\textit{Kummer extensions}]
Consider the rational function field $F=\ff_q(x)$ and let $F'=F(y)$ be the Kummer extension of $F$ given by
	$$y^n = (x-\alpha)(x-\alpha^{-1})$$
where $n\mid q-1$, $\alpha\in \ff_q^*$ and $\alpha\ne \alpha^{-1}$.
By Proposition 6.3.1 in \cite{Sti} we have that $F'/F$ is cyclic of degree $n$ and $\ff_q$ is the full constant field of $F'$. Also, the places $P_{\alpha}$ and $P_{\alpha^{-1}}$, the zeroes of $x-\alpha$ and $x-\alpha^{-1}$ respectively, are totally ramified in $F'/F$.
	
Note that $P_0$ splits completely in $F$. In fact, let 
	$$\varphi(T) = T^n - (x-\alpha)(x-\alpha^{-1}) \in \ff_q(x)[T]$$ 
and let $\bar{\varphi}(T)$ be its reduction mod $P_0$, the zero of $x$ in $F$. Since $x(P_0)=0$ and $n\mid q-1$ then
	$$\bar{\varphi}(T)= T^n -1 = \prod_{i=1}^n(T-a_i) \in \ff_q[T] \,. $$
Therefore, by Kummer's Theorem, $P_0$ splits completely in $F$.
	
Now, let $D=P_1+\cdots + P_n$, where $P_1,\ldots,P_n$ are all the places of $F'$ lying over $P_0$, and 
let $G= r Q_\alpha$ where $r$ is a positive integer and $Q_\alpha$ is the only place of $F'$ lying over $P_\alpha$. 
By Proposition \ref{sigmacyclic} we have that the AG-code $\cod=C_\rr(D,G)$ is sigma-cyclic. 
	
We have estimates for the parameters $[n,k,d]$ of $\cod$. The genus $g$ of $F'$ is $g = [\tfrac{n-1}{2}]$. 
Thus, if $0<r<n$, then $d$ and $k$ satisfy the following inequalities:
	$$ d\geq n-r \qquad \text{and} \qquad k\geq r+1-[\tfrac{n-1}{2}], $$
by \eqref{dist} and \eqref{kG}. \hfill $\lozenge$
\end{exam}

\begin{exam}[\textit{Artin-Schreier extensions}]
Let $p$ be an odd prime number. We consider  the Artin-Schreier extension $F'/\ff_p(x)$ where $F'=\ff_p(x,y)$ and 
$$y^p-y=x^2.$$ 
From Proposition 3.7.8 in \cite{Sti90} we have that $F'$ is a cyclic extension of $\ff_p(x)$ of degree $p$, the place $P_\infty$ is totally ramified in $F'$ and any other place of $\ff_p(x)$ is unramified in $F'$. Since \[T^p-T=T^p-T-x\mod P_0,\] we have from Kummer's theorem that $P_0$ splits completely in $F'$ into $p$ rational places $P_1,\ldots, P_p$. 
	
If we take $D=P_1+\cdots+P_p$ and $G=rQ$, where $Q$ is the only place of $F'$ lying over $P_\infty$ and $1\leq r\leq p-1$, then, by Proposition \ref{sigmacyclic}, the AG-code $C_\rr(D,G)$ is sigma-cyclic. Since the genus of $F'$ is $g=\tfrac 12 (p-1)$, from \eqref{dist} and \eqref{kG} we have that 
	\[ d\geq p-r  \qquad \text{and} \qquad k \geq \tfrac 12 (2r+1-p).\]
We see that, in fact, we must have $\tfrac 12 (p+1) \leq r\leq p-1$. \hfill $\lozenge$
\end{exam}

\begin{exam}[\textit{Cyclic codes from Hermitian function fields}]
Let $H=\ff_{q^2}(x,y)$ be the Hermitian function field, extension of $\ff_{q^2}(x)$, defined by 
\begin{equation} \label{Hermitian}
y^{q+1}=x^{q+1}-1.
\end{equation}
From Examples 6.3.5 and 6.3.6 of \cite{Sti} we have that $H$ is a cyclic extension of degree $q+1$ of $\ff_{q^2}(x)$ which is also a maximal function field of genus $g=\frac 12 q(q-1)$. If $\alpha\in \ff_{q^2}$ is such that $\alpha^{q+1}=1$ then the rational place $P_\alpha$ of $\ff_{q^2}(x)$ is totally ramified in $H$. On the other hand  if $\alpha\in \ff_{q^2}$ is such that $\alpha^{q+1}\neq 1$, then the rational place $P_\alpha$ of $\ff_{q^2}(x)$ splits completely in $H$. Also the pole $P_\infty$ of $x$ in $\ff_{q^2}(x)$ splits completely in $H$.

Now let $\alpha\in \ff_{q^2}$ such that $\alpha^{q+1}=1$ and let $Q_\alpha$ be the only rational place of $H$ lying over $P_\alpha$. If we consider the divisors $D=P_1+\cdots+P_{q+1}$, where each place $P_i$ lies over $P_\infty$, and $G=rQ_\alpha$ with $1\le r\le q-1$ we have, by Proposition \ref{sigmacyclic}, that
$C_\rr(D,G)$ is a sigma-cyclic code of length $q+1$ and minimum distance $d\geq q-r$ defined over the Hermitian function field $H$.  \hfill $\lozenge$
\end{exam}

We finish this section with an interesting consequence of Lema \ref{ej cic} and Proposition \ref{sigmacyclic}.

\begin{coro}\label{corosigmacyclic}
	Let $F'/F$ be a Galois extension of degree $n$ of function fields over $\ff_q$. Let $P_1,\ldots,P_n$ be $n$ different places of $F'$. Suppose that \eqref{conditions} holds with the places $P_1,\ldots,P_n$ for some  $\sigma\in \gal(F'/F)$. Then the extension $F'/F$ is cyclic, $\sigma$ generates $\gal(F'/F)$ and there is a place $P\in\pl(F)$ such that $P_i\cap F=P$ for $i=1,\ldots, n$, i.e.\@ $P$ splits completely in $F'$. Reciprocally if
	$F'/F$ is cyclic and some place $P\in\pl(F)$ splits completely in $F'$ into $P_1,\ldots,P_n$, then \eqref{conditions} holds with these places for any generator $\sigma$ of $\gal(F'/F)$.
\end{coro}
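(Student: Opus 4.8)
The plan is to prove the two implications separately, extracting the cyclicity from the order of $\sigma$ (as in Lemma \ref{ej cic}) for the forward direction, and invoking Proposition \ref{sigmacyclic} for the converse.

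For the forward implication, I would first pin down the order of $\sigma$. Since $\sigma$ permutes the $n$ distinct places $P_1,\dots,P_n$ as the single $n$-cycle displayed in \eqref{conditions}, the smallest positive integer $j$ with $\sigma^j(P_1)=P_1$ is exactly $n$, so $n$ divides the order of $\sigma$ (this is precisely the order argument in the proof of Lemma \ref{ej cic}, which uses only \eqref{conditions} and the distinctness of the $P_i$). On the other hand $\sigma\in\gal(F'/F)$ and $|\gal(F'/F)|=n$, so the order of $\sigma$ divides $n$. Combining the two bounds, $\sigma$ has order exactly $n$, whence $\langle\sigma\rangle=\gal(F'/F)$; in particular $F'/F$ is cyclic and $\sigma$ is a generator.

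Next I would show that all the $P_i$ lie over one common place of $F$. The key observation is that every $\tau\in\gal(F'/F)$ fixes $F$ pointwise, so $\tau(Q)\cap F=Q\cap F$ for any place $Q$ of $F'$: for $z\in F$ one has $z\in\tau(Q)$ iff $\tau^{-1}(z)=z\in Q$. Applying this to $\sigma$ along the chain $P_2=\sigma(P_1),\dots,P_n=\sigma^{n-1}(P_1)$ yields $P_i\cap F=P_1\cap F=:P$ for every $i$. Since $F'/F$ is Galois of degree $n$, the fundamental identity reads $ref=n$, where $r$ is the number of places of $F'$ over $P$ and $e,f$ are the common ramification index and inertia degree. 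As $P_1,\dots,P_n$ are $n$ distinct places over $P$ we get $r\ge n$, forcing $r=n$ and $e=f=1$; that is, $P$ splits completely in $F'$.

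For the converse, let $\sigma$ be an arbitrary generator of $\gal(F'/F)$ and suppose $P$ splits completely into $P_1,\dots,P_n$. Then $P$ has exactly $n$ places above it, and these are all the places of $F'$ over $P$, so Proposition \ref{sigmacyclic} (with $m=n$) applies and gives $[P_1]_\sigma=\{P_1,\dots,P_n\}$. Since this orbit has $n$ elements and $\sigma$ has order $n$, $\sigma$ acts on $\{P_1,\dots,P_n\}$ as a single $n$-cycle; relabeling the places by $P_{i+1}:=\sigma^i(P_1)$ (a harmless reindexing, cf.\ Remark \ref{relabeling}) produces exactly \eqref{conditions}. I expect the only mildly delicate point to be the last step of the forward direction, where the fundamental-identity bookkeeping must be used to rule out ramification and residue extension simultaneously; everything else follows immediately from the order computation and the fact that Galois automorphisms fix the base field.
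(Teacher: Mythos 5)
Your proof is correct and follows essentially the same route as the paper's: the divisibility $n\mid|\sigma|$ from Lemma \ref{ej cic} combined with $|\gal(F'/F)|=n$ to get cyclicity, the fact that Galois automorphisms fix $F$ pointwise to place all the $P_i$ over a common $P$, and Proposition \ref{sigmacyclic} for the converse. Your explicit fundamental-identity bookkeeping ($ref=n$ forcing $e=f=1$) and the relabeling step via Remark \ref{relabeling} merely spell out details the paper leaves implicit.
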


\begin{proof}
Let $\sigma\in \gal(F'/F)$ such that \eqref{conditions} holds for the $n$ rational places $P_1,\ldots,P_n$. From Lemma \ref{ej cic} we have that $n$ divides the order of $\sigma$. Since the $\gal(F'/F)$ is a group of order $n$ we have that $\sigma$ generates $\gal(F'/F)$ and thus $F'/F$ is a cyclic extension. If $P$ is a place of $F$ lying above $P_1$ then, since $\sigma$ restricted to $F$ is the identity, every place in the orbit $[P_1]_\sigma$ lies above $P$. But we know that $[P_1]_\sigma=\{P_1,\ldots,P_n\}$, then we have that every place $P_i$ lies above $P$ and this means that $P$ splits completely in $F'$.
 
The reciprocal implication follows immediately from Proposition~\ref{sigmacyclic}. 
\end{proof}

\section{ One point sigma-cyclic rational codes} \label{sigmarational}
In this section we study the case of one point sigma-cyclic AG-codes over a rational function field $\ff_q(x)$. This kind of cyclic codes will be called sigma-cyclic rational codes (over $\ff_q(x)$).  Besides studying the problem of constructing sigma-cyclic rational codes, we will also study the problem of the monomial equivalence (see Definition \ref{defi}) of sigma-cyclic rational codes of the form $C_\rr(D,rP)$ where $P$ is a rational place of $\ff_q(x)$. We will show that for a fixed length and dimension any sigma-cyclic rational code $C(D,rP)$, with $P$ a rational place of $\ff_q(x)$, is equivalent to one of the form $C_\rr(D',rP_\infty)$ where $P_\infty$ is the only pole of $x$ in $\ff_q(x)$.

Let us recall that the group of $\ff_q$-automorphism of $\ff_q(x)$ can be identified with the projective linear group of $2 \times 2$ matrices over $\ff_q$, that is  
$$\aut_{\ff_q}(\ff_q(x)) \simeq \pgl_2(\ff_q),$$
and if $\sigma\in \aut_{\ff_q}(\ff_q(x))$  then
$$ \sigma(x)=\frac{ax+b}{cx+d},$$
for some $a, b, c, d\in\ff_q$ such that
$\left( \begin{smallmatrix} a & b\\ c & d \end{smallmatrix} \right) \in \pgl_2(\ff_q)$.

\sk 
Let $A=(\begin{smallmatrix} a & b\\ c & d \end{smallmatrix}) \in \pgl_2(\ff_q)$
with $A \ne I$, where $I$ is the identity matrix of $\pgl_2(\ff_q)$. Its inverse in $\pgl_2(\ff_q)$ is $A^{-1} = 
\left(\begin{smallmatrix} d & -b\\ -c & a  \end{smallmatrix}\right)$.   
 
The action of $A^{-1}$ on $\mathbb{P}^1(\ff_q)=\ff_q \cup \{\infty\}$
is as a fractional transformation, that is, if $\alpha\in \ff_q$ we have 
\[ A^{-1} \cdot \alpha = 
\left\{ \begin{array}{cl} \frac{d\alpha-b}{-c\alpha +a} & \qquad \text{if $a\neq c\alpha$,} \\[2mm] 
\infty & \qquad \text{if $a=c\alpha$,} \end{array} \right. \]
and
\[ A^{-1} \cdot \infty = 
\left\{ \begin{array}{cl} -dc^{-1} & \qquad \text{if $c\neq 0$,} \sk \\
\infty & \qquad \text{if $c=0$.} \end{array} \right. \]

In Section 2 we gave examples of sigma-cyclic rational codes. In these examples, the involved matrices $A$ are dilations and translations, namely $(\begin{smallmatrix}
	\omega^{-1} & 0 \\ 0 & 1 \end{smallmatrix})$ in Example 2.9, where $\omega$ is a $n$-th root of unity, and $(\begin{smallmatrix} 1 & -1 \\ 0 & 1 \end{smallmatrix})$ in Example 2.10. 

We now begin with the construction of general sigma-cyclic rational codes, that is with fractional transformations. 
If $A\in \pgl_2(\ff_q)$ is of order $n$, then the orbits of the action of $A^{-1}$ on  non fixed points of  $A$ will give rise to $n$ rational places of $\ff_q(x)$ which are cyclically permuted by the $\ff_q$-automorphism of $\ff_q(x)$ associated to $A$.  More precisely:
 
\begin{prop} \label{automorfismos} 
Let $A = \left(\begin{smallmatrix} a & b\\ c & d \end{smallmatrix}\right) \in \pgl_2(\ff_q)$ a matrix of order $n>1$. Let $\alpha\in \mathbb{P}^1(\ff_q)=\ff_{q}\cup\{\infty\}$ such that $A\cdot\alpha\neq\alpha$. Let $[\alpha]_A=\{\alpha_1,\ldots,\alpha_n\}$ 
be the orbit of $\alpha$ under the action of $A^{-1}$, that is  $\alpha_1=\alpha$ and
$\alpha_{i+1}=A^{-i} \cdot \alpha_1$ for $i=1,\ldots,n-1$. 
Let $\sigma$ be the $\ff_q$-automorphism of $\ff_q(x)$  corresponding to $A$. Let $P_i$ be either the zero of $x-\alpha_i$ in $\ff_q(x)$ if $\alpha_i\in\ff_q$ or the pole of $x$ in $\ff_q(x)$ if $\alpha_i=\infty$ so that $P_i=P_{\infty}$ in this case.  Then we have:  
\begin{enumerate}[$(a)$]
\item $\alpha_1,\ldots,\alpha_n$ are all distinct elements of $\mathbb{P}^1(\ff_q)$. \sk 

\item $P_i\neq P_j$ for $i\neq j$. \sk 

\item $\sigma(P_i)=P_{i+1}$ for $i=1,\ldots,n-1$ and $\sigma(P_n)=P_1$.
\end{enumerate}
Furthermore $\sigma(P_{\infty})=P_{\infty}$ if and only if $c=0$.
\end{prop}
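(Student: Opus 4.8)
The crux is to translate the action of $\sigma$ on rational places into the fractional action of $A^{-1}$ on $\mathbb{P}^1(\ff_q)$. The plan is to first establish that, for every $\gamma\in\mathbb{P}^1(\ff_q)$,
\[
\sigma(P_\gamma)=P_{A^{-1}\cdot\gamma}.
\]
To prove this I would invoke the evaluation identity \eqref{xP}, which gives $x(\sigma(P_\gamma))=(\sigma^{-1}(x))(P_\gamma)$. Since $\sigma^{-1}$ is the automorphism attached to $A^{-1}=\left(\begin{smallmatrix} d & -b\\ -c & a\end{smallmatrix}\right)$, we have $\sigma^{-1}(x)=\tfrac{dx-b}{-cx+a}$, and evaluating this rational function at $P_\gamma$ (where $x$ takes the value $\gamma$) yields exactly $A^{-1}\cdot\gamma$ by the fractional-transformation formulas recalled just before the statement; the pole cases ($a=c\gamma$, or $\gamma=\infty$) are handled by reading off when $\sigma^{-1}(x)$ has a pole at $P_\gamma$, which matches the value $\infty$ in those formulas. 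Because a rational place of $\ff_q(x)$ is determined by the value of $x$ on it, this pins down $\sigma(P_\gamma)$ as claimed.

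With this identity in hand, part $(c)$ is immediate: $\sigma(P_i)=P_{A^{-1}\cdot\alpha_i}=P_{\alpha_{i+1}}=P_{i+1}$ for $1\le i\le n-1$, while $\sigma(P_n)=P_{A^{-1}\cdot\alpha_n}=P_{A^{-n}\cdot\alpha_1}=P_{\alpha_1}=P_1$, using $A^{-n}=I$ in $\pgl_2(\ff_q)$. The final assertion follows by specializing the identity to $\gamma=\infty$: one has $\sigma(P_\infty)=P_{A^{-1}\cdot\infty}$, and the displayed formula for $A^{-1}\cdot\infty$ shows that $A^{-1}\cdot\infty=\infty$ precisely when $c=0$; hence $\sigma(P_\infty)=P_\infty$ if and only if $c=0$.

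The real work is part $(a)$, that the orbit has full size $n$; part $(b)$ will then follow at once, since distinct elements of $\mathbb{P}^1(\ff_q)$ give distinct rational places of $\ff_q(x)$. Here I would show that no power $A^j$ with $1\le j\le n-1$ can fix $\alpha$. Suppose one did, and let $v\in\ff_q^2$ represent $\alpha$, so $A^j v=\mu v$ for some scalar $\mu$. Since $A\cdot\alpha\neq\alpha$, the vector $Av$ is not proportional to $v$, so $\{v,Av\}$ is a basis of $\ff_q^2$; but $A^j(Av)=A(A^jv)=\mu\,Av$, so $A^j$ acts as the scalar $\mu$ on a basis and therefore $A^j=\mu I$, i.e.\ $A^j=I$ in $\pgl_2(\ff_q)$, contradicting that $A$ has order $n>j$. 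Thus the stabilizer of $\alpha$ in $\langle A\rangle$ is trivial and the orbit $\{A^{-i}\cdot\alpha\}_{i=0}^{n-1}$ has exactly $n$ elements.

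The step I expect to be most delicate is this orbit-size argument: it is tempting to invoke only that a nontrivial element of $\pgl_2$ has at most two fixed points, but one still has to rule out that $\alpha$ is a fixed point shared by some proper power $A^j$. The eigenvector computation above sidesteps this cleanly and, conveniently, works directly over $\ff_q$ without passing to the algebraic closure. Everything else is bookkeeping resting on the single identity $\sigma(P_\gamma)=P_{A^{-1}\cdot\gamma}$.
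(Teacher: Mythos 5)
Your proof is correct, and it takes a partly different route from the paper's. The main divergence is in part $(a)$: the paper does not prove the orbit-size statement itself but quotes it from Lemma 2.3 of \cite{LMNX02}, whereas your eigenvector argument --- if $A^{j}$ fixes $\alpha$ then $A^{j}v=\mu v$ for a representative $v\in\ff_q^2$ (with $\mu\neq 0$ since $A^j$ is invertible), and $A\cdot\alpha\neq\alpha$ makes $\{v,Av\}$ a basis on which $A^{j}$ acts by the scalar $\mu$, forcing $A^{j}=I$ in $\pgl_2(\ff_q)$ --- is a clean, self-contained replacement that works uniformly for $\alpha=\infty$ (a point of $\mathbb{P}^1(\ff_q)$ is a line in $\ff_q^2$) and never leaves $\ff_q$. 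For part $(c)$ and the final criterion, the two arguments are the same fractional-linear computation packaged differently: the paper works with generators of the maximal ideals, factoring $\sigma(x-\alpha_i)=\frac{a-c\alpha_i}{cx+d}\,(x-\alpha_{i+1})$ when $a\neq c\alpha_i$, treating separately the cases $a=c\alpha_i$ (image place $P_\infty$) and $\alpha_i=\infty$ (where it computes $\sigma(x^{-1})$), and concluding $\sigma(x-\alpha_i)\in P_{i+1}$, hence $\sigma(P_i)=P_{i+1}$; the $c=0$ criterion is likewise read off from $\sigma(x^{-1})$. You instead derive once the uniform identity $\sigma(P_\gamma)=P_{A^{-1}\cdot\gamma}$ from the residue identity \eqref{xP} applied to $z=x$, using that $\sigma^{-1}$ corresponds to $A^{-1}$ (which holds even though $A\mapsto\sigma_A$ is an anti-homomorphism, since inverses are still respected) and that a rational place of $\ff_q(x)$ is determined by the value of $x$ on it; this makes $(c)$, the wrap-around $\sigma(P_n)=P_1$ via $A^{-n}=I$, and the $c=0$ criterion drop out of one formula instead of a case-by-case factorization. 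If you write it up, make two points explicit that you use tacitly: the pole cases of the identity rest on $v_{\sigma(P)}(x)=v_P(\sigma^{-1}(x))$, i.e.\ part $(b)$ of Lemma \ref{lema sigma}, and $\sigma(P_\gamma)$ is again a rational place because automorphisms preserve inertia degrees (part $(c)$ of the same lemma) --- the latter is what licenses identifying $\sigma(P_\gamma)$ by the value of $x$ alone.
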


\begin{proof}
From Lemma 2.3 in \cite{LMNX02} we have that $\{\alpha_i\}_{i=1}^n$ is a sequence of $n$ different elements of $\mathbb{P}^1(\ff_q)$  because $A^{-1}$ is of order $n>1$ and $\alpha_1$ is not a fixed point of $A^{-1}$. This implies that $P_i\neq P_j$ for $i\neq j$, thus proving $(a)$ and $(b)$. 

We now show $(c)$. For $\alpha_i\in\ff_q$ we have   
\[ \sigma(x-\alpha_i) = \sigma(x)-\sigma(\alpha_i) = \frac{ax+b}{cx+d}-\alpha_i = \frac{(a-c\alpha_i)x-(d\alpha_i-b)}{cx+d}. \]
If $a\neq c\alpha_i$, then $A^{-1}\cdot \alpha_i=\alpha_{i+1}\in\ff_q$ and we have that
\[ \sigma(x-\alpha_i) = (a-c\alpha_i) \,\frac{x-A^{-1} \cdot \alpha_i}{cx+d} = \frac{a-c\alpha_i}{cx+d} \, (x-\alpha_{i+1}). \]
This implies that $\sigma(x-\alpha_i)\in P_{i+1}$ which  means that $\sigma(P_i)=P_{i+1}$. 
Now, if $a=c\alpha_i$, then $\alpha_{i+1}=A^{-1}\cdot \alpha_i=\infty$ and $c\neq 0$, and the above computation shows that
\[\sigma(x-\alpha_i)=-\frac{d\alpha_i-b}{cx+d} \in P_\infty. \]
Then $\sigma(x-\alpha_i)\in P_{i+1}$ where $P_{i+1}=P_{\infty}$ and hence $\sigma(P_i)=P_{i+1}$. 

\sk
On the other hand, if $\alpha_i=\infty$ then $P_i=P_{\infty}$ and $c\neq 0$ because $\{\alpha_i\}_{i=1}^m\subset \ff_q$ when $c=0$. Therefore 
\[\sigma(x^{-1})=\frac{cx+d}{ax+b}=c\,\frac{x+dc^{-1}}{ax+b}=c\,\frac{x-A^{-1}\cdot \infty}{ax+b}=c\,\frac{x-\alpha_{i+1}}{ax+b}\] 
so that $\sigma(x^{-1})\in P_{i+1}$ and thus $\sigma(P_i)=P_{i+1}$. Since $A^{-1}\cdot \alpha_m=A^{-m}\cdot\alpha_1=\alpha_1$ the above computations for $i=m$ show that $\sigma(P_m)=P_1$. 

Finally, notice that if $c=0$ then $\sigma(P_{\infty})=P_{\infty}$ because
\[\sigma(x^{-1})=\frac{d}{ax+b}\in P_{\infty}. \] 
Reciprocally, if $\sigma(P_\infty)=P_{\infty}$, then 
\[\sigma(x^{-1})=\frac{cx+d}{ax+b} \in P_{\infty},\]
since $x^{-1}\in P_\infty$, which is impossible unless $c=0$. 
This completes the proof.
\end{proof}

Under the conditions of the previous proposition, if $D=P_1+\cdots +P_n$ then $\sigma(D)=D$. Therefore, we will have a sigma-cyclic rational code $C_{\rr}(D,G)$ over $\ff_q$   once we find a suitable divisor $G$ invariant under $\sigma$.

\begin{exam}
Let $F=\ff_4(x)$ with $\ff_4$ generated by $\beta$ such that $\beta^2+\beta+1=0$. Take
$A=\left(\begin{smallmatrix} 1  & 1\\ \beta & 0  \end{smallmatrix}\right) \in \pgl_2(\ff_4)$. 
The $\ff_4$-automorphism of $\ff_4(x)$ associated to $A$ is 
$$\sigma(x)=\frac{x+1}{\beta x}.$$ 

The matrix $A$ has order 5 in $\mathrm{PGL}(2,\ff_4)$ with inverse 
$A^{-1} = \left( \begin{smallmatrix}  0 & 1\\ \beta & 1 \end{smallmatrix} \right)$. 
It is easy to check that if we take $\alpha_1 = 1$ and we define $\alpha_{i+1}=A^{-1}\cdot \alpha_i$ for $i\ge 1$, then we get 
$$ \alpha_2 = \beta, \quad \alpha_3 = \beta+1, \quad \alpha_4 = \infty, \quad \alpha_5=0 $$
(and of course $\alpha_6 = \alpha_1$).

For each $1\le i \le 5$, let $P_i$ be the rational place defined by the zero of $x-\alpha_i$ in $\ff_q(x)$ if $\alpha_i\ne \infty$ or else 
$P_i=P_\infty$, the pole of $x$ in $\ff_q(x)$. By Proposition \ref{automorfismos} we have that $P_1,\ldots,P_5$ are all different and $\sigma(P_i) = P_{i+1}$ for $i=1,\ldots,4$ and $\sigma(P_5)=P_1$.

If $D=P_1+P_2+P_3+P_4+P_5$, then $C_{\rr}(D,G)$ will be a cyclic AG-code provided we can find a divisor $G$ disjoint with $D$, 
hence with support consisting of non-rational places of $\ff_4(x)$, such that $G$ is invariant under $\sigma$. 
A convenient choice is, for instance, to consider the place $Q$ of degree $2$ defined by the monic irreducible polynomial 
$x^2+\beta^2x+\beta^2$, that is $Q=P_{x^2+\beta^2x+\beta^2}$. A direct computation shows that 
  \[\sigma(x^2+\beta^2x+\beta^2) = \frac{\beta^2}{x^2} \, (x^2+\beta^2x+\beta^2) \in Q,\]
  so that $\sigma(Q)=Q$ and we can take $G=rQ$ where $r\in\na$. 

In this way we have constructed a sigma-cyclic rational code $C_{\rr}(D,G)$ over $\ff_4$ of length $5$ by using the whole set of rational places of $\ff_4(x)$. \hfill $\lozenge$
\end{exam}
 
We now use the previous proposition to show that, for every prime power $q$, there are sigma-cyclic rational codes of the form $C_{\rr}(D,rP_\infty)$ over $\ff_{q}$ of length $q-1$ where $0 \leq r \leq n-2$.

\begin{exam}
Let $a$ be a primitive element of the multiplicative group $\ff_{q}^{*}$ and let $A=\left(\begin{smallmatrix} 1 & 0\\ 0 & a \end{smallmatrix}\right) \in \pgl_{2}(\ff_{q})$. The inverse of $A$ in $\pgl_{2}(\ff_{q})$ can be written as $A^{-1}=\left(\begin{smallmatrix} a & 0\\ 0 & 1 \end{smallmatrix}\right)$. It is easy to check that the order of $A$ in $\pgl_{2}(\ff_{q})$ is $q-1$ (alternatively see Lemma \ref{orden}). On the other hand $A^{-1}\cdot 1=a\neq 1$ because $a$ generates $\ff_{q}^{*}$. Therefore, the orbit of $1$ has $q-1$ elements, that is
$$[1]_{A}=\{1,a,a^{2},\ldots,a^{q-2}\}=\ff_{q}^{*}.$$

Let $n=q-1$ and let us write $P_{i}=P_{x-a_{i-1}}$ for $1 \leq i \leq n$. Let $\sigma$ be the $\ff_q$-automorphism of $\ff_{q}(x)$ associated to $A$. Using Proposition \ref{automorfismos} we see that $\sigma(P_{i})=P_{i+1 \mod n}$ and $\sigma(P_{\infty})=P_{\infty}$. In this way we have the  sigma-cyclic rational code $C_{\rr}(D,rP_\infty)$ of length $n=q-1$ where  $D=P_{1}+P_{2}+\cdots + P_{n}$ and $0 \leq r \leq n-2$.  
This generalizes Example 2.9. 
	
Similarly, we can define a sigma-cyclic rational code of length $n=q-1$ by using a matrix 
of the form $A=\left(\begin{smallmatrix} a & b\\ 0 & d \end{smallmatrix}\right) \in \pgl_{2}(\ff_{q})$. This generalizes Example 2.10. \hfill $\lozenge$
\end{exam}

\subsubsection*{Monomial equivalence of sigma-cyclic rational codes.} We have mentioned in the introduction the problem of the  equivalence of  cyclic  AG-codes over a function field $F$. We consider now this problem in the special case of cyclic AG-codes over $F=\ff_q(x)$ constructed with the sigma-method.  We know from Proposition 2.2.14 in \cite{Sti} that if two divisors $G_1$ and $G_2$ of a function field $F$ over $\ff_q$ are equivalent, that is, there exists a principal divisor $(z)$ of $F$ such that $G_2=G_1+(z)$, then the AG-codes $C_\rr(D,G_1)$ and $C_\rr(D,G_2)$ are equivalent provided $\sup G_i\cap \sup D=\varnothing$ for $i=1,2$.  This was used in \cite{LN99} to classify rational AG-codes over $\ff_q$.

\begin{rem}
	The equivalence of codes considered in \cite{Sti} is more restrictive than the one we use here: two linear codes are considered equivalent in \cite{Sti} if condition $(b)$ of Definition~\ref{defi} holds.
\end{rem} 

In our case we will need a less general but more precise result. Let $\beta\in \ff_q^*$ and let $\tau_\beta$ be the $\ff_q$-automorphism of $\ff_q(x)$ associated to the matrix
$(\begin{smallmatrix} 1 & \beta \\ 0 & 1 \end{smallmatrix})$ so that 
$$\tau_\beta(x)=x+\beta.$$

\begin{lem} \label{taurr}
Let $\beta\in \ff_q^*$ and let us consider the rational places $P_\beta$ and $P_0$ of $\ff_q(x)$. Then, for any integer $r\geq 1$ we have 	
\[ \tau_\beta(\rr(rP_\beta))=\rr(rP_0). \]
\end{lem}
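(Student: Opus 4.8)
The plan is to exploit the fact that $\tau_\beta$ carries the place $P_\beta$ exactly to the place $P_0$, together with an explicit description of the two Riemann--Roch spaces as $\ff_q$-vector spaces. First I would record the key place computation: since $\tau_\beta(x-\beta)=(x+\beta)-\beta=x$, the uniformizer $x-\beta$ at $P_\beta$ is sent to the uniformizer $x$ at $P_0$. By part $(a)$ of Lemma \ref{lema sigma}, $\tau_\beta(P_\beta)$ is a place of $\ff_q(x)$, and because $\tau_\beta(x-\beta)=x\in P_0$ (that is, $x$ vanishes at $P_0$) one gets $\tau_\beta(P_\beta)=P_0$, hence $\tau_\beta(rP_\beta)=rP_0$.

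The cleanest route is then to write down bases. For the rational function field, $\deg(rP_\beta)=\deg(rP_0)=r\ge 1>2g-2=-2$, so by \eqref{kG} both spaces have dimension $\ell(rP_\beta)=\ell(rP_0)=r+1$. A function lies in $\rr(rP_0)$ precisely when it is regular everywhere except for a pole of order at most $r$ at $P_0$; this gives the explicit basis $\{1,x^{-1},x^{-2},\ldots,x^{-r}\}$, since $x^{-j}$ has a pole of order $j$ at $P_0$, a zero of order $j$ at $P_\infty$, and no other poles. Likewise $\rr(rP_\beta)$ has basis $\{1,(x-\beta)^{-1},\ldots,(x-\beta)^{-r}\}$. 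Applying $\tau_\beta$ to these basis elements yields $\tau_\beta\big((x-\beta)^{-j}\big)=\big((x+\beta)-\beta\big)^{-j}=x^{-j}$ for $0\le j\le r$, so $\tau_\beta$ sends the chosen basis of $\rr(rP_\beta)$ bijectively onto the chosen basis of $\rr(rP_0)$. As $\tau_\beta$ is an $\ff_q$-linear map, this forces the equality $\tau_\beta(\rr(rP_\beta))=\rr(rP_0)$.

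An alternative, more structural argument that I would mention as a cross-check avoids exhibiting bases: using part $(b)$ of Lemma \ref{lema sigma} one shows $v_{\tau_\beta(Q)}(\tau_\beta(z))=v_Q(z)$ for every place $Q$, whence $(\tau_\beta(z))=\tau_\beta((z))$ as divisors. If $z\in\rr(rP_\beta)$, i.e. $(z)\ge -rP_\beta$, then applying the order-preserving action of $\tau_\beta$ on $\divisor(\ff_q(x))$ gives $(\tau_\beta(z))\ge -r\tau_\beta(P_\beta)=-rP_0$, so $\tau_\beta(z)\in\rr(rP_0)$; the reverse inclusion follows by running the same reasoning for $\tau_\beta^{-1}=\tau_{-\beta}$, which satisfies $\tau_{-\beta}(P_0)=P_\beta$.

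The computations here are entirely routine, so there is no genuine obstacle; the only point demanding care is the bookkeeping of directions, namely confirming $\tau_\beta(P_\beta)=P_0$ rather than the reverse and respecting the $\sigma$ versus $\sigma^{-1}$ convention in Lemma \ref{lema sigma}$(b)$. I expect the explicit-basis approach to be the most transparent to present, as it makes the bijection on functions manifest and sidesteps any subtlety about the induced action on divisors.
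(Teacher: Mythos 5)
Your proposal is correct, but your primary argument takes a genuinely different route from the paper's: the paper's proof is precisely what you relegate to a ``cross-check.'' It takes $z\in\rr(rP_\beta)$, writes its pole divisor, and uses Lemma \ref{lema sigma}$(b)$ in the form $\nu_{P_0}(\tau_\beta(z))=\nu_{\tau_\beta^{-1}(P_0)}(z)=\nu_{P_\beta}(z)$, then checks that any place $Q$ with $\nu_Q(\tau_\beta(z))<0$ satisfies $\tau_\beta^{-1}(Q)=P_\beta$, hence $Q=P_0$; the reverse inclusion is the symmetric computation with $\tau_\beta^{-1}$. No bases or dimension counts appear. Your main route instead exhibits the explicit bases $\{1,(x-\beta)^{-1},\ldots,(x-\beta)^{-r}\}$ and $\{1,x^{-1},\ldots,x^{-r}\}$, justified by Riemann--Roch at genus $0$ (indeed $\deg(rP_\beta)=r>2g-2=-2$ gives $\ell=r+1$), and observes that the $\ff_q$-linear map $\tau_\beta$ carries one basis onto the other via $\tau_\beta\bigl((x-\beta)^{-j}\bigr)=x^{-j}$, which forces equality of the spans. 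Both arguments are sound, and your direction bookkeeping ($\tau_\beta(P_\beta)=P_0$ because $\tau_\beta(x-\beta)=x$) matches the paper's. The trade-off: your basis computation is more concrete and dovetails nicely with the paper's own later use of the basis $B_\beta$ of \eqref{baseB} in the proof of Proposition \ref{equalityzero}, but it leans on genus-$0$ specifics of one-point divisors on $\ff_q(x)$; the paper's valuation argument is purely local and would transfer verbatim to any function field, any $\sigma\in\aut_{\ff_q}(F)$, and any pair of places $P$ and $\sigma(P)$, which is presumably why the authors chose it.
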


\begin{proof}
We clearly have that  $\tau_\beta(P_\beta)=P_0$.	Recall that for any place $P$ of $\ff_q(x)$
	\[\rr(rP)=\{z\in \ff_q(x): (z)+rP\geq 0\}\cup \{0\}.\]
	Now let $z\in \rr(rP_\beta)$. Then
	$(z)=(z)_0-(z)_{\infty}=\sum n_PP-sP_\beta$,
	where $r\geq s\geq 1$. We have that
	\[\nu_{P_0}(\tau_\beta(z))=\nu_{\tau_\beta^{-1}(P_0)}(z)=\nu_{P_\beta}(z)=-s,\]
	and if $\nu_Q(\tau_\beta(z))<0$ then $Q=P_0$ because
	\[0>\nu_Q(\tau_\beta(z))=\nu_{\tau_\beta^{-1}(Q)}(z),\]
	so that $\tau_\beta^{-1}(Q)=P_\beta$. Thus, we have $(\tau_\beta(z))_\infty=-sP_0$ with $r\geq s$ and this implies that $\tau_\beta(z)\in \rr(rP_0)$.
	
	Reciprocally, if $y\in \rr(rP_0)$ then 
	$(y)=(y)_0-(y)_{\infty}=\sum n_PP-sP_0$,
	where $r\geq s\geq 1$. We have that
	\[\nu_{P_\beta}(\tau_\beta^{-1}(y))=\nu_{\tau_\beta(P_\beta)}(y)=\nu_{P_0}(y)=-s, \]
	and if $\nu_Q(\tau_\beta^{-1}(y))<0$ then $Q=P_\beta$ because
	\[0>\nu_Q(\tau_\beta^{-1}(y))=\nu_{\tau_\beta(Q)}(y),\]
	so that $\tau_\beta(Q)=P_0$. Therefore we have that $(\tau_\beta^{-1}(y))_{\infty}=-sP_\beta$ with $r\geq s$ and this implies that $\tau_\beta^{-1}(y)=z\in \rr(rP_\beta)$.	
\end{proof}

As a direct consequence of the above result we have

\begin{prop}\label{equality}
Let $P_1,\ldots,P_n$ be $n$  rational places of $\ff_q$ and let $\beta\in \ff_q^*$. Then we have	
\[C_\rr(D,rP_\beta)=C_\rr(\tau_\beta(D),rP_0),\]
where $D=P_1+\cdots+P_n$ and $\tau_\beta(D)=\tau_\beta(P_1)+\cdots+\tau_\beta(P_n)$.
\end{prop}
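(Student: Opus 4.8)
The goal is to prove Proposition \ref{equality}, which asserts the equality (not merely equivalence) of the two AG-codes $C_\rr(D,rP_\beta)$ and $C_\rr(\tau_\beta(D),rP_0)$. The plan is to unwind the definition of each code as an evaluation code and show that the two sets of codewords coincide, using Lemma \ref{taurr} as the bridge between the two Riemann-Roch spaces.

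First I would write down explicitly what each code is. By definition \eqref{C(DG)}, a codeword of $C_\rr(D,rP_\beta)$ has the form $(z(P_1),\ldots,z(P_n))$ with $z$ ranging over $\rr(rP_\beta)$, while a codeword of $C_\rr(\tau_\beta(D),rP_0)$ has the form $(w(\tau_\beta(P_1)),\ldots,w(\tau_\beta(P_n)))$ with $w$ ranging over $\rr(rP_0)$. The key input is Lemma \ref{taurr}, which gives the bijection $\tau_\beta:\rr(rP_\beta)\to\rr(rP_0)$. So the natural move is to set $w=\tau_\beta(z)$ for $z\in\rr(rP_\beta)$; as $z$ runs over $\rr(rP_\beta)$, $w$ runs over all of $\rr(rP_0)$, by the lemma. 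It then remains to show that the $i$-th coordinate matches, i.e.\ that $z(P_i)=w(\tau_\beta(P_i))=(\tau_\beta(z))(\tau_\beta(P_i))$ for each $i$.

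The crux is therefore the coordinate-wise identity $z(P_i)=(\tau_\beta(z))(\tau_\beta(P_i))$. This is exactly the residue-field compatibility recorded in \eqref{xP}: applying that identity with the automorphism $\tau_\beta$ and the place $P=P_i$ gives $(\tau_\beta(z))(\tau_\beta(P_i)) = ((\tau_\beta^{-1}\tau_\beta)(z))(P_i)$, which is $z(P_i)$. Concretely, the map $z(\sigma(P))\mapsto(\sigma^{-1}(z))(P)$ is the residue-field isomorphism $\mathcal{O}_{\sigma(P)}/\sigma(P)\to\mathcal{O}_P/P$, so evaluating $\tau_\beta(z)$ at $\tau_\beta(P_i)$ yields the same element of $\ff_q$ as evaluating $z$ at $P_i$. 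I expect this step to be the only real content, and the main obstacle is simply being careful that the residue-field identifications are the correct ones and that $\tau_\beta$ (with $c=0$ in the notation of Proposition \ref{automorfismos}) indeed maps each $P_i$ to a well-defined rational place so that the evaluations make sense.

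Putting these together: the correspondence $z\leftrightarrow\tau_\beta(z)$ is a bijection $\rr(rP_\beta)\leftrightarrow\rr(rP_0)$ by Lemma \ref{taurr}, and under it the codeword $(z(P_1),\ldots,z(P_n))$ of $C_\rr(D,rP_\beta)$ is carried to the codeword $((\tau_\beta(z))(\tau_\beta(P_1)),\ldots,(\tau_\beta(z))(\tau_\beta(P_n)))$ of $C_\rr(\tau_\beta(D),rP_0)$, and these two tuples are equal coordinate by coordinate. Hence every codeword of the first code is a codeword of the second; running the argument with $\tau_\beta^{-1}$ (or simply noting the bijection is onto) gives the reverse inclusion, and therefore $C_\rr(D,rP_\beta)=C_\rr(\tau_\beta(D),rP_0)$ as claimed. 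I would close by remarking that no monomial matrix is needed here — the two codes are literally the same subspace of $\ff_q^n$ — which is why the statement asserts equality rather than mere equivalence.
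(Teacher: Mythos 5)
Your proposal is correct and follows essentially the same route as the paper's own proof: both use Lemma \ref{taurr} to identify $\rr(rP_\beta)$ with $\rr(rP_0)$ via $\tau_\beta$, and then the residue-field identity \eqref{xP} to verify the coordinate-wise equality $(\tau_\beta(z))(\tau_\beta(P_i))=z(P_i)$, giving both inclusions at once. Your closing observation that the codes are literally the same subspace of $\ff_q^n$, so no monomial matrix is needed, matches the intent of the statement exactly.
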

\begin{proof} From Lemma \ref{taurr} we have each element $y\in \rr(rP_0)$ is of the form $\tau_\beta(z)$ for some $z\in \rr(rP_\beta)$. Thus $C_\rr(\tau_\beta(D),rP_0)\subset C_\rr(D,rP_\beta)$ because
	\begin{align*}
		(y(\tau_\beta(P_1)),\ldots,y(\tau_\beta(P_n))) &= (\tau_\beta(z)(\tau_\beta(P_1)),\ldots,\tau_\beta(z)(\tau_\beta(P_n)))\\
		&=(z(\tau_\beta^{-1}(\tau_\beta(P_1))),\ldots, z(\tau_\beta^{-1}(\tau_\beta(P_n)))\\
		&=(z(P_1),\ldots,z(P_n)).
	\end{align*}
Reciprocally, from Lemma \ref{taurr} each element $z\in \rr(rP_\beta)$ is of the form $\tau_\beta^{-1}(y)$ for some $y\in \rr(rP_0)$. Then $C_\rr(D,rP_\beta)\subset C_\rr(\tau_\beta(D),rP_0)$ because
\begin{align*}
	(z(P_1),\ldots,z(P_n)) = (\tau_\beta^{-1}(y)(P_1),\ldots,\tau_\beta^{-1}(y)(P_n)) =(y(\tau_\beta(P_1)),\ldots,y(\tau_\beta(P_n))),
\end{align*}
and we are done.
\end{proof}

\begin{rem}\label{equalitysigma}
	If the AG-code $C_\rr(D,rP_\beta)$ in Proposition \ref{equality} is sigma-cyclic, so that $\sigma(P_i)=P_{i+1}$ for $1\leq i\leq n-1$ and $\sigma(P_n)=P_1$ for some $\sigma\in\aut_{\ff_q}(\ff_q(x))$, then it is easy to check that  $C_\rr(\tau_\beta(D),rP_0)$ is also sigma-cyclic with respect to the $\ff_q$-automorphism $\tau_\beta\sigma\tau_\beta^{-1}$.
\end{rem}   

\goodbreak 
The sigma-cyclic rational codes we are concerned with are of the following form:
\begin{enumerate}
	\item[\textbf{(A)}] \: $C_\rr(D,rP_\beta)$ with $\beta\in \ff_q^*$, \sk 
	
 	\item[\textbf{(B)}] \: $C_\rr(D,rP_0)$ and \sk 
 	
	\item[\textbf{(C)}] \: $C_\rr(D,rP_\infty)$.
\end{enumerate}
From Proposition \ref{equality} and Remark \ref{equalitysigma} we see that the problem of the equivalence of sigma-cyclic rational codes is reduced to the cases \textbf{(B)} and \textbf{(C)}.
 
Let $A\in \pgl_{2}(\ff_{q})$ be a matrix of order $n$ and let $\alpha, \beta \in \mathbb{P}^1(\ff_{q})$ be such that $A^{-1}\cdot \alpha \ne \alpha$ and $A^{-1}\cdot \beta = \beta$.  Let $1\leq r \leq n-2$ be an integer and let us consider the orbit of $\alpha$ under the action of $A^{-1}$, i.e.
\begin{equation}\label{orbit}
[\alpha]_A=\{\alpha_1,\alpha_2,\ldots,\alpha_n\},
\end{equation}
where $\alpha_1=\alpha$ and $\alpha_{i+1}=A^{-1}\cdot\alpha_i$. According to Proposition \ref{automorfismos}, associated to the orbit $[\alpha]_A$ we have the rational places $P_{\alpha_1},\ldots,P_{\alpha_n}$ of $\ff_q(x)$ and they are all distinct.  With these places and the rational place $P_\beta$ of $\ff_q(x)$ we  have the divisors
\begin{equation} \label{divD}
D=P_{\alpha_1}+\cdots+P_{\alpha_n} \qquad \text{and} \qquad G=rP_\beta,
\end{equation}
whose supports are disjoint. In this way, by  considering the $\ff_q$-automorphism $\sigma$ of $\ff_q(x)$  associated to $A$, we have the sigma-cyclic rational code  $C_\rr(D,G)$. For the problem of the equivalence of the sigma-cyclic rational codes, it will be convenient to denote the code $C_\rr(D,G)$ defined above as
\[\cod(A,\alpha, \beta, r),\]
to emphasize its dependence  on $A$, $\alpha$, $\beta$ and $r$.

We begin with the case \textbf{(B)}. The $\ff_q$-automorphisms of $\ff_q(x)$ fixing the place $P_0$ are represented by matrices of $\pgl_2(\ff_q)$ fixing the point $0\in \mathbb{P}^1(\ff_q)$. Any matrix of $\pgl_2(\ff_q)$ with this property can always be written as
\begin{equation*} \label{fixing0}
	\begin{pmatrix}
		1 & 0\\
		c & d
	\end{pmatrix}.
\end{equation*}
We prove now that a sigma-cyclic rational code associated to a matrix of the form \eqref{fixing0} coincides with another one associated to a matrix fixing the point $\infty$. For this purpose we will need the following  $\ff_q$-basis of $\rr(rP_{\beta})$
\begin{equation} \label{baseB} 
B_{\beta} = \Big\{ 1, \frac{1}{x-\beta}, \frac{1}{(x-\beta)^{2}}, \ldots, \frac{1}{(x-\beta)^{r}} \Big\}
\end{equation}
where $\beta \in \ff_q$.  
\begin{prop} \label{equalityzero} Let $(\begin{smallmatrix}
		1 & 0 \\ c & d \end{smallmatrix}) \in \pgl_2(\ff_q)$ be of order $n$. Then
	$(\begin{smallmatrix} d & c \\ 0 & 1 \end{smallmatrix}) \in \pgl_2(\ff_q)$
	is of order $n$, fixes the point $\infty$ and
	\[\cod\left(\left(\begin{smallmatrix} 1 & 0 \\ c & d \end{smallmatrix}\right),\alpha , 0 , r\right) = \cod\left(\left(\begin{smallmatrix} d & c \\ 0 & 1 \end{smallmatrix}\right),\alpha^{-1} , \infty , r\right).\]
	In other words, for a given  sigma-cyclic rational code over $\ff_q$ of the form $C_\rr(D,rP_0)$ there exists a divisor $D'$ of $\ff_q(x)$ such that
	\[ C_\rr(D,rP_0)=C_\rr(D',rP_\infty).\] 
\end{prop}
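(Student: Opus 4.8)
The plan is to show that the automorphism $\tau$ associated to $\bigl(\begin{smallmatrix} d & c \\ 0 & 1 \end{smallmatrix}\bigr)$ produces the same divisor data and the same Riemann--Roch space as the original code, so that the two codes are literally equal (not merely equivalent). First I would verify the two easy structural claims: that $B = \bigl(\begin{smallmatrix} d & c \\ 0 & 1 \end{smallmatrix}\bigr)$ has order $n$ and fixes $\infty$. The order claim follows because $A = \bigl(\begin{smallmatrix} 1 & 0 \\ c & d \end{smallmatrix}\bigr)$ and $B$ are related by the anti-diagonal conjugating matrix $J=\bigl(\begin{smallmatrix} 0 & 1 \\ 1 & 0 \end{smallmatrix}\bigr)$, i.e. $B = J A^{-1} J$ in $\pgl_2(\ff_q)$ (one checks $J\bigl(\begin{smallmatrix} d & -0 \\ -c & 1\end{smallmatrix}\bigr)J=\bigl(\begin{smallmatrix} d & c \\ 0 & 1\end{smallmatrix}\bigr)$ up to the scalar that $\pgl_2$ ignores), and conjugation preserves order; that $B$ fixes $\infty$ is immediate from $c=0$ in its bottom row, via the formula for $B\cdot\infty$ recorded before Proposition~\ref{automorfismos}. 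The point of the matrix $J$ is that, as a fractional transformation, it is the inversion $x\mapsto x^{-1}$, which sends $0\leftrightarrow\infty$ and $\alpha\mapsto\alpha^{-1}$; this is exactly why the fixed point $0$ of $A$ becomes the fixed point $\infty$ of $B$ and the orbit point $\alpha$ becomes $\alpha^{-1}$.

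Next I would identify the divisor $D'$ and the pole divisor explicitly. Let $\eta$ be the $\ff_q$-automorphism of $\ff_q(x)$ given by $\eta(x)=x^{-1}$, associated to $J$. By Lemma~\ref{lema sigma}(a)--(d), $\eta$ permutes places, and directly $\eta(P_0)=P_\infty$ and $\eta(P_\alpha)=P_{\alpha^{-1}}$ for $\alpha\in\ff_q^*$. I would set $D'=\eta(D)$, so that $\sop(D')=\{P_{\alpha_1^{-1}},\ldots,P_{\alpha_n^{-1}}\}$, which by the inversion action on $\pl^1(\ff_q)$ is precisely the orbit $[\alpha^{-1}]_B$ furnished by Proposition~\ref{automorfismos} applied to $B$. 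This shows that the right-hand code $\cod(B,\alpha^{-1},\infty,r)=C_\rr(D',rP_\infty)$ is built from the evaluation places $\eta(P_{\alpha_i})$ and the pole divisor $rP_\infty=r\,\eta(P_0)=\eta(rP_0)$.

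The core computation is then to show the two codes are equal as subspaces of $\ff_q^n$. The clean way is to prove $\eta(\rr(rP_0))=\rr(rP_\infty)$ by the same valuation-tracking argument used in Lemma~\ref{taurr}: for $z\in\rr(rP_0)$ the only possible pole of $z$ is at $P_0$ with order $\le r$, and using Lemma~\ref{lema sigma}(b) one gets $\nu_{P_\infty}(\eta(z))=\nu_{\eta^{-1}(P_\infty)}(z)=\nu_{P_0}(z)\ge -r$ while $\eta(z)$ has no other poles, so $\eta(z)\in\rr(rP_\infty)$, and symmetrically for the converse. Having this, the equality of codes follows exactly as in Proposition~\ref{equality}: for $w\in\rr(rP_\infty)$ write $w=\eta(z)$ with $z\in\rr(rP_0)$, and using the identity \eqref{xP}, namely $w(\eta(P_{\alpha_i}))=(\eta^{-1}(w))(P_{\alpha_i})=z(P_{\alpha_i})$, one sees that the codeword of $C_\rr(D',rP_\infty)$ coming from $w$ equals the codeword of $C_\rr(D,rP_0)$ coming from $z$; running the argument in both directions gives the two inclusions and hence equality.

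The main obstacle I anticipate is bookkeeping rather than conceptual: one must be careful that the inversion $\eta$ sends the specific orbit $[\alpha]_A$ onto the specific orbit $[\alpha^{-1}]_B$ in the right cyclic order, so that the relabelling $\alpha_i\mapsto\alpha_i^{-1}$ is consistent with $B=JA^{-1}J$ generating the shift on $D'$ (this is where one should invoke Remark~\ref{relabeling} if the induced order is a nontrivial power of the expected cycle, since any such reordering only changes the code by a coordinate permutation and the stated equality is up to that identification). A secondary subtlety is the behavior of $\eta$ at $\alpha_i=0$ or $\infty$: since $0$ is the \emph{fixed} point $\beta$ and is excluded from the orbit $[\alpha]_A$, and $\infty$ may or may not occur in the orbit, I would check the two boundary cases ($\eta(P_\infty)=P_0$ and $\eta(P_0)=P_\infty$) separately, exactly as Proposition~\ref{automorfismos} splits into the $\alpha_i\in\ff_q$ and $\alpha_i=\infty$ cases, to confirm the basis \eqref{baseB} transforms correctly and no evaluation point collides with the pole $P_\infty$.
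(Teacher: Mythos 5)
Your proposal is correct in substance, but it follows a genuinely different route from the paper. The paper's proof is a direct generator-matrix computation: using the basis \eqref{baseB} with $\beta=0$ (i.e.\@ $\{1,x^{-1},\ldots,x^{-r}\}$ for $\rr(rP_0)$) and the basis $\{1,x,\ldots,x^r\}$ for $\rr(rP_\infty)$, it writes down the two evaluation matrices $M_1$ (entries $\alpha_i^{-j}$) and $M_2$ (entries $\theta_i^{\,j}$) and proves by induction on $i$ that $\theta_{i+1}=B^{-1}\cdot\theta_i=\frac{\theta_i-c}{d}$ and $\alpha_{i+1}=A^{-1}\cdot\alpha_i=\frac{d\alpha_i}{-c\alpha_i+1}$ force $\theta_i=\alpha_i^{-1}$, whence $M_1=M_2$ and the codes are literally equal. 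Your argument replaces this coordinate computation by its structural source: conjugation by the inversion $\eta(x)=x^{-1}$ (the matrix $J$), combined with the transport of Riemann--Roch spaces $\eta(\rr(rP_0))=\rr(rP_\infty)$ proved exactly as in Lemma \ref{taurr}, and the code identification via \eqref{xP} as in Proposition \ref{equality}. What your approach buys is conceptual clarity and reusability (it shows the proposition is an instance of the same mechanism as Proposition \ref{equality}, with $\tau_\beta$ replaced by $\eta$); what the paper's approach buys is that the identity $\theta_i=\alpha_i^{-1}$, and hence the equality of the two specific ordered codes, is verified with no appeal to general machinery, and the generator matrices obtained are then reused in the subsequent analysis of case \textbf{(C)}.

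One computational slip to fix: your conjugation identity is stated backwards. One has $JAJ=B$, i.e.\@ $B^{-1}=JA^{-1}J$, not $B=JA^{-1}J$; indeed $JA^{-1}J=\bigl(\begin{smallmatrix} 1 & -c \\ 0 & d \end{smallmatrix}\bigr)$, which is $B^{-1}$, and this is not proportional to $B$ in $\pgl_2(\ff_q)$ in general. This correction works in your favor: since the orbits in \eqref{orbit} are generated by $A^{-1}$ and $B^{-1}$ respectively, the relation $B^{-1}=JA^{-1}J$ gives $B^{-1}\cdot\alpha_i^{-1}=\alpha_{i+1}^{-1}$, so $\eta$ carries $[\alpha]_A$ onto $[\alpha^{-1}]_B$ in exactly matching cyclic order --- recovering precisely the paper's inductive identity $\theta_i=\alpha_i^{-1}$ --- and the fallback through Remark \ref{relabeling} that you hedge with is unnecessary (and would in any case only yield equivalence, weaker than the literal equality the proposition asserts). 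Your explicit treatment of the boundary case $\alpha_i=\infty$ (with $\theta_i=0$, consistent with $(x^{-j})(P_\infty)=0$) is a point in your favor: the paper's matrix $M_1$ implicitly assumes all $\alpha_i\in\ff_q^*$, which the convention $\infty^{-1}=0$ repairs.
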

\begin{proof}
	The first two assertions are immediate. We write
	$A=(\begin{smallmatrix}
		1 & 0\\
		c & d
	\end{smallmatrix})$ 
and $B=(\begin{smallmatrix}
		d & c\\
		0 & 1
	\end{smallmatrix})$.
	Let us consider the orbits $[\alpha]_A=\{\alpha_1,\ldots,\alpha_n\}$ and $[\alpha^{-1}]_B=\{\theta_1,\ldots,\theta_n\}$ where $\theta_1=\alpha^{-1}$. Let
	$$
	M_{1}=
	\begin{pmatrix}
		1 & 1 & \ldots &  1 \\
		\alpha_1^{-1}  & \alpha_2^{-1} & \ldots & \alpha_n^{-1} \\
		\alpha_1^{-2}  & \alpha_2^{-2} & \ldots & \alpha_n^{-2} \\
		\vdots            &      \vdots     & &   \vdots\\
		\alpha_1^{-r} & \alpha_2^{-r}  & \ldots & \alpha_n^{-r} \\
	\end{pmatrix}
	\qquad \text{ and } \qquad
	M_{2}=
	\begin{pmatrix}
		1            &1           &\ldots& 1     \\
		\theta_{1}   &\theta_{2}  &\ldots&\theta_{n} \\
		\theta_{1}^{2}&\theta_{2}^{2}&\ldots&\theta_{n}^{2}\\
		\vdots            &      \vdots     & &   \vdots\\
		\theta_{1}^{r}&\theta_{2}^{r}&\ldots&\theta_{n}^{r}\\
	\end{pmatrix}
	$$
	We have that $M_2$ is a generator matrix for the code $\cod(B,\alpha^{-1},\infty,r)$ and, by using the base $B_\beta$ in \eqref{baseB} with $\beta=0$,  we also have that $M_1$ is a generator matrix for the code $\cod\left(A,\alpha , 0 , r\right)$.

	The proposition is proved if we show that $\theta_i=\alpha_i^{-1}$ for $i=1,\ldots, n$ because, in this case, $M_1=M_2$. We proceed by induction. By definition we have that $\theta_1=\alpha^{-1}=\alpha_1^{-1}$.   Suppose now that  $\theta_{i}=\alpha_{i}^{-1}$. Then
	\begin{align*}
		\theta_{i+1}=B^{-1}\theta_{i}=\begin{pmatrix}
			1 & -c\\
			0 & d
		\end{pmatrix}\cdot \theta_i &=\frac{\theta_{i} - c}{d} =\frac{\alpha_i^{-1}-c}{d} =\frac{1-c\alpha_i}{d\alpha_i}.
	\end{align*}
	On the other hand
	\[ \alpha_{i+1}=A^{-1}\cdot \alpha_i=\begin{pmatrix}
		d & 0\\
		-c & 1
	\end{pmatrix}\cdot\alpha_i=\frac{d\alpha_i}{-c\alpha_i+1}.\] 
	Thus $\theta_{i+1}=\alpha_{i+1}^{-1}$, as desired.
\end{proof}

 We now consider case \textbf{(C)}. The $\ff_q$-automorphisms of $\ff_q(x)$ fixing the place $P_\infty$ are represented by matrices of $\pgl_2(\ff_q)$ fixing the point $\infty\in \mathbb{P}^1(\ff_q)$. Any matrix of $\pgl_2(\ff_q)$ with this property is of the form   
 \[\begin{pmatrix}
a   & b   \\
0   & d  \end{pmatrix}.\] 

\begin{rem}
	The matrix  $(\begin{smallmatrix} a&b \\ 0&d \end{smallmatrix})\in\pgl_2(\ff_q)$ can be written in $\pgl_2(\ff_q)$ as a matrix of the form $ (\begin{smallmatrix} 1&-b' \\ 0&a' \end{smallmatrix})$ where  $b'=-ba^{-1}$ and $a'=da^{-1}$. By writing the matrices of $\pgl_2(\ff_q)$ fixing the point $\infty\in \mathbb{P}^1(\ff_{q})$ in the form $A=(\begin{smallmatrix}
	1   & -b   \\
	0   & a  \end{smallmatrix})$ we get a cleaner expression for the powers of $A^{-1}$ and for certain generator matrices of the code $\cod(A,\alpha, \infty, r)$. 
\end{rem}

\begin{lem}\label{orden}
Let $A= (\begin{smallmatrix}
1   & -b   \\
0   & a  \end{smallmatrix})\in \pgl_2(\ff_q)$, $A\ne I$. The order of $A^{-1}$ in $\pgl_2(\ff_q) $ is 
\[ |A^{-1}|= \left\{
\begin{tabular}{cc}
$p$, & \quad if $a=1$, \\ [.5em]
$|a|$, & \quad if $a \ne 1$,
\end{tabular}
\right. \]
where $p=\Char(\ff_q)$ and $|a|$ is the order of $a$ in the multiplicative group $\ff_q^\ast$. 
\end{lem}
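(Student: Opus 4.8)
The plan is to reduce everything to a direct computation of the powers of $A$, using that $|A^{-1}|=|A|$ holds in any group. First I would compute $A^n$ by induction on $n$. Since $A=(\begin{smallmatrix} 1 & -b \\ 0 & a \end{smallmatrix})$ is upper triangular, all its powers remain upper triangular, the diagonal entries are simply $1$ and $a^n$, and the off-diagonal entry is governed by a geometric sum in $a$. An easy induction then yields
\[
A^n = \begin{pmatrix} 1 & -b(1+a+\cdots+a^{n-1}) \\ 0 & a^n \end{pmatrix}.
\]

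Next I would translate the condition ``$A^n=I$ in $\pgl_2(\ff_q)$'' into explicit equations in $\ff_q$. In $\pgl_2(\ff_q)$ this means that $A^n$ is a scalar matrix $\lambda I$; comparing the $(1,1)$-entries forces $\lambda=1$, so $A^n=I$ in $\pgl_2(\ff_q)$ is equivalent to the two conditions $a^n=1$ and $b(1+a+\cdots+a^{n-1})=0$ in $\ff_q$. This is the one place where the projective nature of the group matters, and I would emphasize that the fixed $(1,1)$-entry pins the scalar to $1$, so that projective triviality coincides with honest triviality of the matrix.

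Then I would split into the two stated cases. If $a=1$, the geometric sum collapses to $n\cdot 1$, so the second condition reads $nb=0$; since $A\neq I$ forces $b\neq 0$ when $a=1$, this is equivalent to $p\mid n$, and hence the smallest positive such $n$ is $p=\Char(\ff_q)$. If $a\neq 1$, the sum equals $(a^n-1)/(a-1)$, which vanishes automatically whenever $a^n=1$; thus the off-diagonal condition is subsumed by the first one and imposes no extra constraint, so that $A^n=I$ in $\pgl_2(\ff_q)$ is equivalent to $a^n=1$ alone. The smallest positive $n$ with $a^n=1$ is, by definition, the order $|a|$ of $a$ in $\ff_q^\ast$.

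I do not expect a genuine obstacle here. The only two subtleties worth flagging are the passage to $\pgl_2(\ff_q)$ just described, and the observation that in the case $a\neq 1$ the off-diagonal condition is a consequence of $a^n=1$, so that it contributes nothing further; everything else is the routine inductive evaluation of $A^n$ and the elementary interpretation of the two resulting scalar equations.
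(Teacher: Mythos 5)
Your proof is correct and takes essentially the same route as the paper, which likewise computes the explicit power formula $(A^{-1})^m = \big(\begin{smallmatrix} a^m & b(a^{m-1}+\cdots+a+1) \\ 0 & 1 \end{smallmatrix}\big)$ and splits into the cases $a=1$ and $a\neq 1$; your only (harmless) deviation is working with powers of $A$ rather than $A^{-1}$, justified by $|A|=|A^{-1}|$. Your explicit observation that the fixed diagonal entry $1$ pins the projective scalar, so that triviality in $\pgl_2(\ff_q)$ coincides with $A^n=I$ as a matrix, is left implicit in the paper's ``and the result follows,'' but it is the same argument.
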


\begin{proof}
We have $A^{-1}= (\begin{smallmatrix} a & b \\ 0 & 1 \end{smallmatrix})$.
It is easy to see that 
$$\begin{pmatrix} a & b \\ 0 & 1 \end{pmatrix}^m = \begin{pmatrix} a^m & b(a^{m-1}+\cdots +a+1) \\ 0 & 1 \end{pmatrix}.$$
Thus, $(A^{-1})^m = \big( \begin{smallmatrix} 1 & mb \\ 0 & 1 \end{smallmatrix}\big)$ if $a=1$ and 
$(A^{-1})^m = \big(\begin{smallmatrix} a & b \frac{a^m-1}{a-1} \\ 0 & 1 \end{smallmatrix} \big)$ if $a\ne 1$, 
and the result follows. 
\end{proof}

We prove now a technical result which will be used to find a suitable expression of the elements of certain generator matrices of the codes  $\cod(A,\alpha, \infty, r)$.

\begin{lem}\label{alfadiferencia}
Let $A= (\begin{smallmatrix} 1 & -b \\ 0 & a \end{smallmatrix}) \in \pgl_2(\ff_q)$ be a matrix of order $n>1$. Let  $\alpha\in \ff_q$ be such that  $A^{-1}\cdot \alpha\neq \alpha$ and let us consider the orbit $[\alpha]_A$ as in \eqref{orbit}. Then for every $1\leq i<j\leq n$ we have that 
$$\alpha_j-\alpha_{i} = (b + (a-1)\alpha) \alpha^{i-1} \, \sum_{k=0}^{j-i-1}a^k.$$
\end{lem}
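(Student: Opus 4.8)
The plan is to derive an explicit closed form for the powers of $A^{-1}$ and then read off the orbit elements $\alpha_i$ directly, so that the difference $\alpha_j - \alpha_i$ becomes a transparent computation. By Lemma \ref{orden}'s proof we already know that
\[
(A^{-1})^m = \begin{pmatrix} a^m & b\,\frac{a^m-1}{a-1} \\ 0 & 1 \end{pmatrix}
\]
when $a \neq 1$ (and the analogous matrix with $mb$ in the upper-right corner when $a=1$). First I would use the recursion $\alpha_{i+1} = A^{-1}\cdot \alpha_i$, together with $\alpha_1 = \alpha$, to obtain $\alpha_{i} = (A^{-1})^{i-1}\cdot \alpha$ as a fractional transformation. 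Since the matrices here are upper-triangular, the action on a point $\alpha \in \ff_q$ is simply $\alpha_i = a^{i-1}\alpha + b(a^{i-2}+\cdots+a+1)$, i.e.\ an affine map, with no denominator to worry about.

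The key step is then a direct subtraction. Writing $\alpha_j - \alpha_i$ from the affine expression above gives
\[
\alpha_j - \alpha_i = (a^{j-1}-a^{i-1})\alpha + b\big((a^{j-2}+\cdots+1) - (a^{i-2}+\cdots+1)\big).
\]
The second bracket telescopes to $a^{i-1}+a^{i}+\cdots+a^{j-2} = a^{i-1}\sum_{k=0}^{j-i-1}a^k$, and the first coefficient factors as $a^{i-1}(a^{j-i}-1) = a^{i-1}(a-1)\sum_{k=0}^{j-i-1}a^k$. Factoring out the common $a^{i-1}\sum_{k=0}^{j-i-1}a^k$ yields
\[
\alpha_j - \alpha_i = a^{i-1}\Big(\sum_{k=0}^{j-i-1}a^k\Big)\big((a-1)\alpha + b\big),
\]
which is exactly the claimed identity. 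The induction on $i$ (or on the gap $j-i$) is only needed if one prefers not to invoke the closed form for $(A^{-1})^{i-1}$; in that case I would establish $\alpha_{i} = a\,\alpha_{i-1}+b$ as the one-step recursion and push the telescoping sum through by induction.

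I expect the main (though modest) obstacle to be bookkeeping with the geometric-series indices: one must be careful that the recursion $\alpha_{i+1}=A^{-1}\cdot\alpha_i$ with $A^{-1}=(\begin{smallmatrix} a & b \\ 0 & 1\end{smallmatrix})$ corresponds to the affine map $\alpha\mapsto a\alpha+b$ in the correct orientation, and that the summation ranges in both the first-coefficient factorization and the telescoping of the constant term line up to produce the same $\sum_{k=0}^{j-i-1}a^k$. The case $a=1$ deserves a separate sanity check, where $\sum_{k=0}^{j-i-1}a^k = j-i$ and the formula reduces to $\alpha_j-\alpha_i = (j-i)b\,\alpha^{0} = (j-i)b$, matching the linear growth $\alpha_i = \alpha + (i-1)b$ predicted by the $a=1$ branch of Lemma \ref{orden}. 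Since everything reduces to a finite geometric sum, no genuine difficulty arises once the affine form of the orbit is in hand.
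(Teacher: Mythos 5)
Your proof is correct and takes essentially the same route as the paper's: both establish the affine closed form $\alpha_i = a^{i-1}\alpha + b\sum_{k=0}^{i-2}a^k$ (yours packaged via the matrix-power formula from the proof of Lemma \ref{orden}, the paper's directly from the recursion $\alpha_{i+1}=a\alpha_i+b$), then subtract and factor the two geometric sums to get $\alpha_j-\alpha_i=\big(b+(a-1)\alpha\big)a^{i-1}\sum_{k=0}^{j-i-1}a^k$. One remark: your final expression carries the factor $a^{i-1}$, exactly as the paper's own proof does, so the factor $\alpha^{i-1}$ in the printed statement of the lemma is a typo for $a^{i-1}$ (the subsequent proposition also uses $a^{s-1}$), and your computation silently proves the corrected statement.
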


\begin{proof}
Notice that $\alpha_1=\alpha$ and for $j=2,\ldots, n$ we have 
$$ \alpha_j=a^{j-1}\alpha+b(1+a+a^2+\cdots + a^{j-2}).$$ 
Therefore, for $2\le i < j \le n$, we have that
$$\alpha_{j} - \alpha_{i} = a^{j-1}\alpha + b \sum_{k=0}^{j-2}{a^{k}} - a^{i-1}\alpha -b\sum_{k=0}^{i-2}{a^{k}} $$
and hence 
\begin{eqnarray*}
\alpha_{j} - \alpha_{i} 
&=&  a^{i-1}\alpha(a^{j-i}-1) +b\sum_{k=i-1}^{j-2}{a^{k}} \\ 
&=& a^{i-1}\alpha(a^{j-i}-1) + b \sum_{k=0}^{j-i-1}{a^{k+i-1}} \\
&=& a^{i-1}\alpha(a^{j-i}-1) + b a^{i-1}\sum_{k=0}^{j-i-1}{a^{k}} \\
&=& a^{i-1} \Big\{ \alpha(a-1)  \sum_{k=0}^{j-i-1}{a^{k}}  + b\sum_{k=0}^{j-i-1}{a^{k}} \Big\} \\
&=& \big( b+(a-1)\alpha \big) a^{i-1}\sum_{k=0}^{j-i-1}{a^{k}}.
\end{eqnarray*}
Similarly, we get 
$\alpha_{j} - \alpha_{1} = (b+(a-1)\alpha) \sum_{k=0}^{j-2}{a^{k}}$ 
for any $j=2,\ldots, n,$ and we are done.
\end{proof}

Recall that  any $[n,k]$-code has  a generator matrix in standard form $M=(I_k \mid W)$ where $I_k$ is the $k\times k$ identity matrix and $W$ is an $k \times (n-k)$ matrix.

Now we show that if 
\[ A = (\begin{smallmatrix} 1&-b \\ 0&a \end{smallmatrix}) \in \pgl_{2}(\ff_{q}),\] the sigma-cyclic code $\cod(A,\alpha,\infty,r)$ is independent of $b$. 

\begin{prop}
Let  $A = (\begin{smallmatrix} 1&-b \\ 0&a \end{smallmatrix}) \in \pgl_{2}(\ff_{q})$ be a matrix of order $n>1$.   Then 
\[\cod(A,\alpha,\infty,r)=\cod(B,\gamma,\infty,r),\] where $B = (\begin{smallmatrix} 1&-b' \\ 0&a \end{smallmatrix}) \in \pgl_{2}(\ff_{q})$.  
\end{prop}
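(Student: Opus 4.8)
The plan is to identify both codes as Reed--Solomon--type evaluation codes and then exhibit an affine change of variable matching their evaluation points. First I would fix the $\ff_q$-basis $\{1,x,x^2,\ldots,x^r\}$ of $\rr(rP_\infty)$, so that the codeword attached to $f\in\rr(rP_\infty)$ is $(f(\alpha_1),\ldots,f(\alpha_n))$ with $\alpha_j$ the $j$-th point of the orbit $[\alpha]_A$; thus $\cod(A,\alpha,\infty,r)=\{(f(\alpha_1),\ldots,f(\alpha_n)):f\in\ff_q[x],\ \deg f\le r\}$, and likewise $\cod(B,\gamma,\infty,r)$ is the evaluation of the same polynomial space at the points $\gamma_j$ of $[\gamma]_B$. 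Here I would use that $A$ fixes $\infty$ by Proposition \ref{automorfismos}, so $\alpha\ne\infty$ and the whole orbit lies in $\ff_q$, and similarly for $B$; also, by Lemma \ref{orden} the order of $B$ equals that of $A$, namely $n$, so both codes indeed have length $n$.

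Next I would record the dynamics explicitly. Since $A^{-1}=(\begin{smallmatrix} a&b\\0&1\end{smallmatrix})$ acts on $\ff_q$ as the affine map $\mathfrak a(t)=at+b$, and $B^{-1}$ acts as $\mathfrak b(t)=at+b'$, the orbits are the forward iterates $\alpha_j=\mathfrak a^{\,j-1}(\alpha)$ and $\gamma_j=\mathfrak b^{\,j-1}(\gamma)$. The heart of the argument is to conjugate $\mathfrak a$ to $\mathfrak b$ inside the affine group $\{t\mapsto \lambda t+\mu:\lambda\in\ff_q^\ast,\ \mu\in\ff_q\}$. If $a\ne 1$, both maps are dilations by $a$ about their fixed points $\tfrac{b}{1-a}$ and $\tfrac{b'}{1-a}$, so the translation $\psi(t)=t+\tfrac{b'-b}{1-a}$ satisfies $\psi\circ\mathfrak a=\mathfrak b\circ\psi$; if $a=1$ (which forces $b,b'\ne 0$, since $A,B\ne I$), the scaling $\psi(t)=\tfrac{b'}{b}\,t$ does the job. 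In either case I would set $\gamma=\psi(\alpha)$; then $\gamma$ is not a fixed point of $\mathfrak b$ (because $\alpha$ is not fixed by $\mathfrak a$ and $\psi$ conjugates the two dynamics), and from $\psi\circ\mathfrak a=\mathfrak b\circ\psi$ one gets $\psi(\alpha_j)=\gamma_j$ for every $j$.

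Finally I would conclude by substitution: writing $\psi(t)=\lambda t+\mu$, for any $f$ with $\deg f\le r$ the polynomial $g=f\circ\psi^{-1}$ again has degree $\le r$ and satisfies $g(\gamma_j)=f(\psi^{-1}(\gamma_j))=f(\alpha_j)$ for all $j$, so the codeword $(f(\alpha_1),\ldots,f(\alpha_n))$ lies in $\cod(B,\gamma,\infty,r)$; the reverse inclusion is symmetric, giving the desired equality. Equivalently, in terms of the Vandermonde-type generator matrices $M_A=(\alpha_j^{\,k})$ and $M_B=(\gamma_j^{\,k})$, expanding $\gamma_j^{\,k}=(\lambda\alpha_j+\mu)^k$ shows $M_B=TM_A$ for an invertible lower-triangular $(r+1)\times(r+1)$ matrix $T$ with diagonal $(1,\lambda,\ldots,\lambda^r)$, so the two row spaces coincide. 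I expect the only delicate point to be the case split $a=1$ versus $a\ne 1$, together with the correct bookkeeping of the fixed points and the resulting explicit $\gamma$; once the conjugating affine map is in hand, the affine invariance of the polynomial evaluation code makes the equality immediate, and since the conjugation is applied coordinatewise at matching indices it yields genuine equality of subspaces rather than mere monomial equivalence.
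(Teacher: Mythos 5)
Your proof is correct, and it takes a genuinely different route from the paper's. The paper works directly with the Vandermonde-type generator matrix $M=(V\mid U)$, row-reduces it to standard form $(I_k\mid W)$ with $W=V^{-1}U$, computes each entry $(w_j)_i$ by Cramer's rule as a ratio of Vandermonde determinants, and then invokes Lemma \ref{alfadiferencia} to cancel the common factor $b+(a-1)\alpha$ from every difference $\alpha_j-\alpha_i$, concluding that $W$ depends only on $a$. Your argument replaces this computation by a conceptual one: conjugating the affine dynamics $t\mapsto at+b$ to $t\mapsto at+b'$ via $\psi$ and using the affine invariance of the degree-$\le r$ evaluation code ($f\mapsto f\circ\psi^{-1}$ preserves $\rr(rP_\infty)$), which explains \emph{why} the cancellation in the paper occurs; your triangular-matrix remark $M_B=TM_A$ with $T$ invertible lower-triangular is an equally clean way to finish. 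The one point of scope to be aware of: the paper's computation proves the stronger fact that the code depends only on $a$ --- i.e.\@ equality holds for \emph{every} admissible base point $\gamma$, not just your specific $\gamma=\psi(\alpha)$ --- and this stronger form is exactly what is used right after the proposition to write $\cod(A,\alpha,\infty,r)=\cod(D_a,1,\infty,r)$ and $\cod(T,1,\infty,r)$ with the prescribed point $\gamma=1$. Your method closes this gap in one extra line: the centralizer of $t\mapsto at+b'$ in the affine group (dilations about the fixed point $b'/(1-a)$ when $a\neq 1$; translations when $a=1$) acts transitively on the non-fixed points of $\ff_q$ and commutes with the orbit dynamics, so composing $\psi$ with a suitable centralizing map lets you match any prescribed $\gamma$. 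With that remark added, your proof delivers the full strength of the paper's result by a shorter and more structural argument.
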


\begin{proof}
Since $G=rP_{\infty}$, from Proposition 2.3.3 of \cite{Sti} we have that a  generator matrix of $\cod(A,\alpha,\infty,r)$ is 
\[ M=\begin{pmatrix}
1            &1           &\ldots&1           &1            &\ldots& 1     \\
\alpha_{1}   &\alpha_{2}  &\ldots&\alpha_{k}  &\alpha_{k+1} &\ldots&\alpha_{n} \\
\alpha_{1}^{2}&\alpha_{2}^{2}&\ldots&\alpha_{k}^{2}&\alpha_{k+1}^{2}&\ldots&\alpha_{n}^{2}\\
\vdots            &      \vdots     & &   \vdots        &     \vdots       & & \vdots     \\
\alpha_{1}^{k-1}&\alpha_{2}^{k-1}&\ldots&\alpha_{k}^{k-1}&\alpha_{k+1}^{k-1}&\ldots&\alpha_{n}^{k-1}\\
\end{pmatrix} = \left(V_{k\times k}  \mid U_{k\times (n-k)}\right). \]
The result will be proved once we check that $M$ is row equivalent to a generator matrix in standard form $(I_k\mid W)$ where the entries of $W$ depend only on $a$.  

We write $V=V_{k\times k}$ and $U=U_{k\times (n-k)}$. Since all the $\alpha_i$'s are different and $V$ is a transposed Vandermonde matrix, we have that 
$$\det(V) = \prod_{j=2}^k \, \prod_{i=1}^{j-1} (\alpha_j-\alpha_i)\neq 0,$$ 
so that  $V$ is an invertible matrix and thus $V^{-1}M = (I_k \mid W)$ where $W=V^{-1}U$.  If we denote by $u_j$ and $w_j$ the $j$-th column of $U$ and $W$ respectively, then $w_j=V^{-1}u_j$ and therefore $Vw_j=u_j$. This linear system has a unique solution given by Cramer's rule: every component $(w_j)_i$ of the column vector $w_j$ satisfy  
$$(w_j)_i=\frac{\det(V_i^j)}{\det(V)},$$ where 
\[
V_{i}^j=
\begin{pmatrix}
1  &\ldots&1      &1      & 1  &\ldots& 1     \\
\alpha_{1}     &\ldots&\alpha_{i-1}  &\alpha_{j}& \alpha_{i+1} &\ldots&\alpha_{k} \\
\alpha_{1}^{2}&\ldots&\alpha_{i-1}^{2}&\alpha_{j}^{2} & \alpha_{i+1}^{2}&\ldots&\alpha_{k}^{2}\\
\vdots            &           & \vdots&     \vdots      &    \vdots        & & \vdots     \\
\alpha_{1}^{k-1}&\ldots&\alpha_{i-1}^{k-1}& \alpha_{j}^{k-1} & \alpha_{i+1}^{k-1}&\ldots&\alpha_{k}^{k-1}\\
\end{pmatrix}
\]
 is the matrix formed by replacing the $i$-th column of $V$ by the column vector $u_j$. Since $V_i^j$ is also a transposed Vandermonde matrix, its determinant is given by
\begin{align*}
\det(V_i^j) = \prod_{s=2}^k \, \prod_{t=1}^{s-1}(\beta_s-\beta_t) =
\prod_{s=1}^{i-1}(\alpha_j-\alpha_s) \prod_{s=i+1}^k (\alpha_s-\alpha_j) \prod_{\underset{s\neq i}{s=2}}^k \;  \prod_{\underset{t\neq i}{t=1}}^{s-1} (\alpha_s-\alpha_t)
\end{align*} 
where $\beta_s=\alpha_s$ if $s\neq i$ and $\beta_i=\alpha_j$. 
 
Thus  
\begin{align*}
(w_j)_i = \frac{\det(V_i^j)}{\det(V)} = \frac{\prod\limits_{s=1}^{i-1}(\alpha_j-\alpha_s) 
	\prod\limits_{s=i+1}^k (\alpha_s-\alpha_j) \prod\limits_{\underset{s\neq i}{s=2}}^k 
	\prod\limits_{\underset{t\neq i}{t=1}}^{s-1} (\alpha_s-\alpha_t)}{\prod\limits_{s=2}^k \prod\limits_{t=1}^{s-1} (\alpha_s-\alpha_t)}, 
\end{align*}
and hence
\begin{align*}
 (w_j)_i& = 
\frac{\prod_{s=1}^{i-1}(\alpha_j-\alpha_s)\prod_{s=i+1}^k
(\alpha_s-\alpha_j) }{\prod_{s=1}^{i-1}(\alpha_i-\alpha_s)\prod_{s=i+1}^k
(\alpha_s-\alpha_i)}
=\prod_{{s=1}}^{i-1}\frac{(\alpha_j-\alpha_s)}{(\alpha_i-\alpha_s)}\prod_{{s=i+1}}^{k}\frac{(-1)(\alpha_j-\alpha_s)}{(\alpha_s-\alpha_i)}.
\end{align*}
From this and Lemma \ref{alfadiferencia} we have
\begin{align*}
(w_j)_i & =(-1)^{k-i}\prod_{{s=1}}^{i-1}\frac{ (a\alpha+b-\alpha)a^{s-1}\sum_{t=0}^{j-s-1}{a^{t}}}{ (a\alpha+b-\alpha)a^{s-1}\sum_{t=0}^{i-s-1}{a^{t}}}
\prod_{{s=i+1}}^{k}\frac{ (a\alpha+b-\alpha)a^{s-1}\sum_{t=0}^{j-s-1}{a^{t}}}{ (a\alpha+b-\alpha)a^{i-1}\sum_{t=0}^{s-i-1}{a^{t}}}\\
 & =(-1)^{k-i}\prod_{s=1}^{i-1}\frac{ \sum_{t=0}^{j-s-1}{a^{t}}}{ \sum_{t=0}^{i-s-1}{a^{t}}}\prod_{{s=i+1}}^{k}a^{s-i}\frac{\sum_{t=0}^{j-s-1}{a^{t}}}{\sum_{t=0}^{s-i-1}{a^{t}}}\\
 & =(-1)^{k-i}\prod_{{s=i+1}}^{k}a^{s-i}\prod_{s=1}^{i-1}\frac{ \sum_{t=0}^{j-s-1}{a^{t}}}{ \sum_{t=0}^{i-s-1}{a^{t}}}\prod_{{s=i+1}}^{k}\frac{\sum_{t=0}^{j-s-1}{a^{t}}}{\sum_{t=0}^{s-i-1}{a^{t}}} ,
\end{align*}
so that
\begin{align*}
(w_j)_i & =(-1)^{k-i}a^{\frac 12 (k-i)(k-i+1)} \prod_{s=1}^{i-1}\frac{ \sum_{t=0}^{j-s-1}{a^{t}}}{ \sum_{t=0}^{i-s-1}{a^{t}}}\prod_{{s=i+1}}^{k}\frac{\sum_{t=0}^{j-s-1}{a^{t}}}{\sum_{t=0}^{s-i-1}{a^{t}}}.
\end{align*}
This shows that each column of $W$ depends only on $a$  as desired.
\end{proof}

The above result states that for any matrix $A = (\begin{smallmatrix} 1 & -b \\ 0 & a \end{smallmatrix}) \in \pgl_2(\ff_q)$ 
of order $n>1$ there are two possibilities: 
\begin{equation*}
\cod(A,\alpha,\infty,r) = \begin{cases}
\cod(D_a,\gamma,\infty,r) &  \qquad \text{if $a \ne 1$, where $D_a = (\begin{smallmatrix} 1 & 0 \\ 0 & a \end{smallmatrix})$,} 
\\[2mm]
\cod(T,\gamma,\infty,r) &  \qquad \text{if $a = 1$, where $T = (\begin{smallmatrix} 1 & 1 \\ 0 & 1 \end{smallmatrix})$.}
\end{cases}
\end{equation*}
In the case $a\neq 1$, we have that $1\in \ff_q$ is not a fixed point of $D_a$ and then
\[\cod(A,\alpha,\infty,r)= \cod(D_a,1,\infty,r).\]    
In the case $a=1$ the only fixed point of $T$ is $\infty$. Therefore
\[ \cod(A,\alpha,\infty,r)=\cod(T,1,\infty,r).\]
Notice that there is only one type of sigma-cyclic rational codes of the form $C_{\rr}(D,rP_\infty)$ of length $p=\mathrm{char}(\ff_q)$: namely those of the form $\cod(T,\alpha,\infty,r)$ because  no element of $\ff_q^{*}$ can have order $p$.

We show now that there is, up to equivalence, only one sigma-cyclic rational code of the form $C_\rr(D,rP_\infty)$ of a given length and dimension.

\begin{prop}\label{equalityinfinity}
Let $c\in\ff_q$ be an element of order $n$ in the multiplicative group $\ff_q^{*}$. 
Let $A = (\begin{smallmatrix} 1 & -b \\ 0 & a \end{smallmatrix}) \in \pgl_2(\ff_q)$ 
of order $n$ and let $\alpha\in \ff_q$ such that $A\cdot\alpha\neq \alpha$. Then we have:
\begin{enumerate}[$(a)$]
	\item If $a\neq 1$ then $\cod(A,\alpha,\infty,r)\sim \cod(D_c,1,\infty,r)$
where $D_c = (\begin{smallmatrix} 1 & 0 \\ 0 & c \end{smallmatrix})$, and \msk 
	
	\item If $a=1$ then $\cod(A,\alpha,\infty,r)= \cod(T,1,\infty,r)$ where $T=(\begin{smallmatrix} 1 & 1 \\ 0 & 1 \end{smallmatrix})$.
\end{enumerate} 
\end{prop}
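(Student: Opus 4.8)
The plan is to handle the two cases separately, leaning on the structural reductions already carried out before the statement so that everything collapses to a single observation about orbits. For part $(b)$ there is essentially nothing new to do: when $a=1$ the matrix $A=\left(\begin{smallmatrix} 1 & -b \\ 0 & 1 \end{smallmatrix}\right)$ is of translation type, and the preceding $b$-independence proposition, together with the fact that the unique fixed point of $T=\left(\begin{smallmatrix} 1 & 1 \\ 0 & 1 \end{smallmatrix}\right)$ is $\infty$ (so that $\alpha=1$ is an admissible non-fixed point), already gives the equality $\cod(A,\alpha,\infty,r)=\cod(T,1,\infty,r)$. The hypothesis on $c$ plays no role here, which is consistent with the fact that by Lemma \ref{orden} one has $n=|A^{-1}|=p=\Char(\ff_q)$, while $\ff_q^*$ contains no element of order $p$.

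For part $(a)$ I would first reduce to a diagonal matrix. Since $a\neq 1$, the discussion following the $b$-independence proposition gives $\cod(A,\alpha,\infty,r)=\cod(D_a,1,\infty,r)$ with $D_a=\left(\begin{smallmatrix} 1 & 0 \\ 0 & a \end{smallmatrix}\right)$, and Lemma \ref{orden} shows that $a$ has order $n$ in $\ff_q^*$. Hence it suffices to prove that $\cod(D_a,1,\infty,r)\sim\cod(D_c,1,\infty,r)$ whenever both $a$ and $c$ have order $n$.

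The crux is that the two codes are supported on exactly the same set of rational places. As $\ff_q^*$ is cyclic it has a unique subgroup of order $n$, so $\langle a\rangle=\langle c\rangle$; the orbit of $1$ under $D_a^{-1}$ (acting by $\gamma\mapsto a\gamma$) is $\{1,a,\ldots,a^{n-1}\}=\langle a\rangle$, and the orbit under $D_c^{-1}$ is $\langle c\rangle$, so the two orbits coincide as sets. Thus the divisors $D$ consist of the same places, listed in two possibly different orders, while both codes evaluate the same Riemann-Roch space $\rr(rP_\infty)$. Using the Vandermonde generator matrix from Proposition 2.3.3 of \cite{Sti} exactly as in the previous proof, the generator matrix of $\cod(D_c,1,\infty,r)$ is obtained from that of $\cod(D_a,1,\infty,r)$ by permuting columns, so the two codes differ by a coordinate permutation. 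Since a permutation is a monomial transformation in the sense of Definition \ref{defi}, this yields $\cod(D_a,1,\infty,r)\sim\cod(D_c,1,\infty,r)$ and hence $\cod(A,\alpha,\infty,r)\sim\cod(D_c,1,\infty,r)$.

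I do not expect a serious obstacle. The one point that deserves care is the recognition that reordering a common set of evaluation points is precisely a coordinate permutation, and that this is enough because the equivalence of Definition \ref{defi} admits permutations of coordinates; everything else is a direct consequence of results already proved.
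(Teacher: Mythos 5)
Your proposal is correct and follows essentially the same route as the paper's proof: reduce via the $b$-independence proposition and Lemma \ref{orden} to diagonal matrices $D_a$ with $|a|=n$, observe that the uniqueness of the order-$n$ subgroup of the cyclic group $\ff_q^*$ forces $\langle a\rangle=\langle c\rangle$ so the evaluation orbits coincide as sets, and conclude that the Vandermonde-type generator matrices differ only by a column permutation, which is a monomial equivalence. The only cosmetic difference is that you phrase the key step in terms of reordering a common set of evaluation places, while the paper phrases it directly as a column permutation of the generator matrices; these are the same observation.
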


In other words, given an element  $c\in\ff_q$ of order $n\neq\mathrm{char}(\ff_q)$ in the multiplicative group $\ff_q^{*}$, if $\sigma\in \aut_{\ff_q}(\ff_q(x))$ is of order $n$ such that $\sigma(P_\infty)=P_\infty$ and $\sigma(P_{i \mod n})=P_{i+1 \mod n}$ where $P_1,\ldots,P_n$ are rational places of $\ff_q(x)$, then
	\[C_\rr(P_1+\cdots+P_n,rP_\infty) \sim \cod(D_c,1,\infty,r).\]
Furthermore if $\sigma\in \aut_{\ff_q}(\ff_q(x))$ is of order $p=\mathrm{char}(\ff_q)$ and the above conditions on the places hold for $n=p$ then
	\[C_\rr(P_1+\cdots+P_p,rP_\infty)= \cod(T,1,\infty,r).\]

\begin{proof}
The assertion for the case $a=1$ has already been  proved. In the case $a\neq 1$, it suffices to show that if $a_1$ and $a_2$ are two  elements of order $n$ in the multiplicative group $\ff_q^{*}$ then
\[\cod(D_{a_1},1,\infty,r) \sim \cod(D_{a_2},1,\infty,r).\] 
Let $i=1,2$. Since 
$D_{a_i}^{-1} = (\begin{smallmatrix} a_i & 0 \\ 0 & 1 \end{smallmatrix})$,
a generator matrix for $\cod(D_{a_i},1,\infty,r)$ is of the form
\[
\begin{pmatrix}
1         &1         &1          &\ldots& 1     \\
1         & a_i      &a_i^{2}      &\ldots&  a_i^{(n-1)} \\ 
1         & a^{2}_i    &a_i^{4}      &\ldots&  a_i^{2(n-1)} \\
\vdots    & \vdots   &\vdots     &\vdots& \vdots\\
1         & a^{(k-1)}_i    &a_i^{2(k-1)}      &\ldots&  a_i^{(k-1)(n-1)} 
\end{pmatrix}.
\]
Now the second row is just the cyclic subgroup of $\ff_q^{*}$ generated by $a_i$, that is
\[ \langle a_i\rangle=\{1,a_{i},a_{i}^{2},\ldots , a_{i}^{n-1} \}. \]
Since $a_1$ and $a_2$ have the same order, we have that  $ \langle a_1\rangle= \langle a_2\rangle$ (see Theorem 7 in \cite{DF}).
This means that the generator matrix of $\cod(D_{a_1},1,\infty,r)$ can be obtained from the generator matrix of $\cod(D_{a_2},1,\infty,r)$ by columns permutation.
\end{proof}

We are now in a position to state one of the main results of this paper. 
\begin{thm}\label{mainequality}
There is, up to equivalence, only one sigma-cyclic rational code over $\ff_q$ of a given length and dimension of the form $C_\rr(D,rP_\beta)$, where $D=P_{\alpha_1}+\cdots+P_{\alpha_n}$ as in \eqref{orbit}--\eqref{divD}, $1\le r\le n-2$ and $\beta\in \mathbb{P}^1(\ff_q)$.	
\end{thm}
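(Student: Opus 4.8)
The plan is to show that every sigma-cyclic rational code $C_\rr(D,rP_\beta)$ of the prescribed form is equivalent to a single canonical code determined only by its length and its dimension, by chaining together the reduction results already proved. Since the ambient field is the rational function field $\ff_q(x)$, its genus is $g=0$, and for $1\le r\le n-2<n=\deg D$ formula \eqref{kG} gives $k=\ell(rP_\beta)=r+1$ for every $\beta\in\mathbb{P}^1(\ff_q)$. Hence prescribing the length $n$ and the dimension $k$ is the same as prescribing the pair $(n,r)$, and it suffices to prove that any two sigma-cyclic rational codes $C_\rr(D,rP_\beta)$ sharing the same $(n,r)$ are monomially equivalent.

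First I would dispose of the position of $\beta$, reducing the three cases \textbf{(A)}, \textbf{(B)}, \textbf{(C)} to the single case \textbf{(C)}. If $\beta\in\ff_q^*$ (case \textbf{(A)}), then Proposition \ref{equality} together with Remark \ref{equalitysigma} yields the equality $C_\rr(D,rP_\beta)=C_\rr(\tau_\beta(D),rP_0)$, with the right-hand side again sigma-cyclic and of the same length $n$, which places us in case \textbf{(B)}. If $\beta=0$ (case \textbf{(B)}), Proposition \ref{equalityzero} produces a divisor $D'$ with $C_\rr(D,rP_0)=C_\rr(D',rP_\infty)$, once more sigma-cyclic and of length $n$, landing us in case \textbf{(C)}. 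Thus every code under consideration equals (not merely is equivalent to) a code of the form $\cod(A,\alpha,\infty,r)$ for some $A\in\pgl_2(\ff_q)$ of order $n$ fixing $\infty$.

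It then remains to show that all codes $\cod(A,\alpha,\infty,r)$ of fixed length $n$ and fixed $r$ are mutually equivalent. Writing $A=(\begin{smallmatrix}1&-b\\0&a\end{smallmatrix})$, the proposition immediately preceding Proposition \ref{equalityinfinity} shows that the code is independent of $b$, so only the scalar $a$ and the order $n$ matter. By Lemma \ref{orden} the order of such a matrix is $p=\Char(\ff_q)$ when $a=1$ and $|a|$ when $a\neq 1$; since $\gcd(p,q-1)=1$, no element of $\ff_q^*$ has order $p$, so for a given $n$ exactly one of the two alternatives occurs. If $n=p$, then necessarily $a=1$ and Proposition \ref{equalityinfinity}$(b)$ gives $\cod(A,\alpha,\infty,r)=\cod(T,1,\infty,r)$. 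If $n\neq p$, then $n\mid q-1$, we may fix any element $c\in\ff_q^*$ of order $n$, and Proposition \ref{equalityinfinity}$(a)$ gives $\cod(A,\alpha,\infty,r)\sim\cod(D_c,1,\infty,r)$; this representative is independent of the choice of $c$ because any two elements of order $n$ generate the same cyclic subgroup of $\ff_q^*$. In either case the resulting canonical code depends only on $(n,r)$, hence only on the length and dimension, which proves the claim.

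The argument is essentially a bookkeeping synthesis of the earlier propositions, so I do not expect a genuinely difficult step. The point requiring the most care is verifying that the length $n$ and the dimension $k=r+1$ are genuinely preserved along the entire chain \textbf{(A)}$\to$\textbf{(B)}$\to$\textbf{(C)}$\to$ canonical form (the translation $\tau_\beta$ and the conjugation in Proposition \ref{equalityzero} both send an orbit to an orbit of the same size, and $r$ never changes), together with confirming that the dichotomy $n=p$ versus $n\neq p$ is exhaustive and mutually exclusive for a fixed $n$ — which is precisely where the coprimality of $p$ and $q-1$ is used.
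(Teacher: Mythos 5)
Your proof is correct and takes essentially the same route as the paper: the paper's proof of Theorem \ref{mainequality} is precisely the chain you describe, reducing case \textbf{(A)} to \textbf{(B)} via Proposition \ref{equality} and Remark \ref{equalitysigma}, then \textbf{(B)} to \textbf{(C)} via Proposition \ref{equalityzero}, and finishing with the $b$-independence result and Proposition \ref{equalityinfinity}. Your additional bookkeeping --- that $g=0$ and $r<n$ give $k=r+1$, so length and dimension determine $(n,r)$, and that the dichotomy $n=p$ versus $n\mid q-1$ is exhaustive and exclusive --- merely makes explicit what the paper leaves implicit in its one-line proof.
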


\begin{proof}
	The result follows inmediately from Propositions \ref{equalitysigma}, \ref{equalityzero} and \ref{equalityinfinity}.
\end{proof}

\section{The sigma-method and cyclic extensions}
Let $F'/F$ be a finite extension of function fields over $\ff_q$. We have seen that the sigma-method   works nicely with elements of $\aut_F(F')$ if $F'/F$ is a cyclic extension. We will show now that we also have a good understanding when the sigma-method is used with elements of $\aut_{\ff_q}(F)$. The main result in this final section is presented in Theorem \ref{ext cic}, where a detailed description of the behavior of the places cyclically moved by an element of   $\aut_{\ff_q}(F)$ is given.

We begin with a result in the slightly more general situation of  working with elements of $\aut_{\ff_q}(F)$. 

\begin{prop}\label{sigmacyclic2}
	Let $F$ be a function field over $\ff_q$ and let $P_1,\ldots,P_n$ be different places of $F$. Suppose there exists $\sigma\in\aut_{\ff_q}(F)$ such that \eqref{conditions} holds. Then there exist a subfield $E$ of $F$ and a place $P$ of $E$ such that $F/E$ is a cyclic extension of degree $m$ divisible by $n$, and $P$ decomposes exactly in $F$ into the places $P_1,\ldots,P_n$ with $e(P_i|P)f(P_i|P)=\tfrac mn$ for $i=1,\ldots,n$. 
\end{prop}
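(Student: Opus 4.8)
The plan is to realize the cyclic group $\langle\sigma\rangle$ as a Galois group and then read off everything from the Galois theory of $F$ over its fixed field. First I would let $m$ be the order of $\sigma$ in the finite group $\aut_{\ff_q}(F)$ and set $E=F^{\langle\sigma\rangle}$, the fixed field of the cyclic subgroup generated by $\sigma$. By Artin's theorem (the Galois correspondence for the finite group $\langle\sigma\rangle$ acting on $F$), the extension $F/E$ is Galois with $\gal(F/E)=\langle\sigma\rangle$, hence cyclic of degree $[F:E]=m$. Since $\sigma$ fixes $\ff_q$ pointwise we have $\ff_q\subseteq E$, and as $F/E$ is algebraic, $E$ is again a function field over $\ff_q$. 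That $n\mid m$ is already part of the conclusion of Lemma \ref{ej cic}.

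Next I would identify the place of $E$ below the $P_i$. From the proof of Lemma \ref{ej cic} we know $\sigma^i(P_1)=P_{i+1}$ for $0\le i\le n-1$ and $\sigma^n(P_1)=P_1$, with $n$ the least such exponent; hence the $\langle\sigma\rangle$-orbit of $P_1$ is exactly $[P_1]_\sigma=\{P_1,\ldots,P_n\}$, a set of $n$ distinct places. Put $P=P_1\cap E$. Because every $\tau\in\langle\sigma\rangle$ fixes $E$ pointwise, one checks directly that $\tau(Q)\cap E=Q\cap E$ for any place $Q$ of $F$; applying this with $\tau=\sigma^i$ and $Q=P_1$ shows that each $P_i$ lies over $P$.

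The key step is to argue that $P_1,\ldots,P_n$ are \emph{all} the places of $F$ lying over $P$, so that $P$ decomposes exactly into them. Here I invoke part $(e)$ of Lemma \ref{lema sigma}: since $F/E$ is Galois, $\gal(F/E)=\langle\sigma\rangle$ acts transitively on the set of places of $F$ above $P$. But that set is precisely the $\langle\sigma\rangle$-orbit of $P_1$, which we identified as $\{P_1,\ldots,P_n\}$. Thus $P$ decomposes exactly into the $n$ places $P_1,\ldots,P_n$, and no further place of $F$ sits above $P$.

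Finally I would read off the ramification data from the fundamental identity. In the Galois extension $F/E$ all places above $P$ share a common ramification index $e=e(P_i\,|\,P)$ and a common inertia degree $f=f(P_i\,|\,P)$, since they are conjugate under $\gal(F/E)$ and part $(c)$ of Lemma \ref{lema sigma} gives $e(\sigma(P_i)\,|\,P)=e(P_i\,|\,P)$ and likewise for $f$. The fundamental equality then reads $n\,e\,f=[F:E]=m$, whence $e(P_i\,|\,P)\,f(P_i\,|\,P)=m/n$ for every $i$, completing the proof. The only point requiring real care is the word \emph{exactly} in the decomposition: it rests on matching the abstract transitivity of the Galois action in part $(e)$ of Lemma \ref{lema sigma} with the concrete orbit computation of Lemma \ref{ej cic}, which together guarantee that the places over $P$ are neither more nor fewer than $P_1,\ldots,P_n$.
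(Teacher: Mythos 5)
Your proof is correct and follows essentially the same route as the paper's: both take $E=F^{\langle\sigma\rangle}$ so that $F/E$ is cyclic with group $\langle\sigma\rangle$ of order $m$, identify the places of $F$ above $P=P_1\cap E$ with the orbit $\{P_1,\ldots,P_n\}$ via the transitivity of the Galois action (part $(e)$ of Lemma \ref{lema sigma}), and conclude $e(P_i\,|\,P)f(P_i\,|\,P)=m/n$ from the fundamental identity together with the constancy of $e$ and $f$ in a Galois extension. Your write-up is in fact slightly more explicit than the paper's on the word \emph{exactly}, spelling out that the orbit computation from Lemma \ref{ej cic} and the transitivity combine to rule out extra places above $P$, a point the paper's proof asserts more briefly.
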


\begin{proof}
Let $G$ be the subgroup of $\aut_{\ff_q}(F)$ generated by $\sigma$ and let $E=F^G$, the fixed field of $G$.
Thus, $F/E$ is a Galois extension with Galois group
	$$\gal(F/E)=G=\langle \sigma \rangle \,.$$
Thus, $F/E$ is a cyclic extension of degree $m$, where $m$ is the order of $\sigma$ in $\aut_{\ff_q}(F)$.
Also $P=P_1\cap E$ is a place of $E$ and, by \eqref{conditions}, we also have that $P=P_i\cap E$ for $i=1,\ldots,n$.
These are, in fact, all the places of $F$ lying above $P$, because $G$ acts transitively on the set of places of $F$ lying above $P$. 
	
We have the fundamental relation 
	$$\sum_{i=1}^n e_i f_i = [F:E]=m,$$ 
where $e_i=e(P_i\,|\, P)$ and $f_i=f(P_i \,|\, P)$, $1\le i\le n$, are the ramification index and the inertia degree, respectively. 	Since $F/E$ is Galois, we have that $e(P_i\,|\,P)=e$ and $f(P_i\,|\,P)=f$, for $i=1,\ldots,n$. 
Thus $ne\!f=m$ and we are done with the first part.
\end{proof}

Note that in the previous proposition, $\ff_q(x)$ is not necessarily contained in $E$. However, it is possible to explicitly construct a subfield $E$ of $F$ containing $\ff_q(x)$  by considering elements of $\aut_{\ff_q}(F)$ in Proposition \ref{sigmacyclic2}. First we need a technical lemma. 

\begin{lem}\label{galoisciclica}
	Consider a cyclic Galois extension $F'/F$ of function fields over $\ff_q$ and let $\tau \in \aut(F')$. Then $F'/\tau(F)$ is also cyclic. 
\end{lem}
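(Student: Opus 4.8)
The plan is to show that conjugation by $\tau$ transports the entire Galois structure of $F'/F$ over to $F'/\tau(F)$. First I would record two elementary facts about $\tau$: since $\tau\in\aut(F')$ is an automorphism of $F'$, we have $\tau(F')=F'$, and since $\ff_q$ is the full constant field of $F'$ the automorphism $\tau$ maps $\ff_q$ onto itself; consequently $\tau(F)$ is a subfield of $F'$ containing $\ff_q$, i.e. again a function field over $\ff_q$, and $\tau$ restricts to an isomorphism $F\xrightarrow{\sim}\tau(F)$. Thus $\tau$ is a field automorphism of the top field $F'$ that carries the tower $F\subseteq F'$ onto the tower $\tau(F)\subseteq F'$, so these two extensions are isomorphic as field extensions. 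In particular $[F':\tau(F)]=[F':F]$, and because normality and separability are preserved under the isomorphism $\tau$, the extension $F'/\tau(F)$ is again Galois.

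Next I would exhibit an explicit isomorphism of Galois groups. Define $\Phi\colon\gal(F'/F)\to\gal(F'/\tau(F))$ by $\Phi(g)=\tau g\tau^{-1}$. The three things to check are routine: (i) $\Phi(g)$ really lies in $\gal(F'/\tau(F))$, i.e. it is an automorphism of $F'$ fixing $\tau(F)$ pointwise — indeed, for $x\in F$ one computes $\tau g\tau^{-1}(\tau(x))=\tau g(x)=\tau(x)$ since $g$ fixes $F$; (ii) $\Phi$ is a group homomorphism, which is immediate from $\tau g_1g_2\tau^{-1}=(\tau g_1\tau^{-1})(\tau g_2\tau^{-1})$; and (iii) $\Phi$ is bijective, its inverse being $h\mapsto\tau^{-1}h\tau$ (the symmetric computation shows $\tau^{-1}h\tau$ fixes $F$ whenever $h$ fixes $\tau(F)$). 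Hence $\gal(F'/F)\cong\gal(F'/\tau(F))$.

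Finally, since $F'/F$ is cyclic by hypothesis, the group $\gal(F'/F)$ is cyclic, and therefore so is the isomorphic group $\gal(F'/\tau(F))$. Thus $F'/\tau(F)$ is a cyclic extension, as claimed.

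I do not expect a serious obstacle: this is essentially a transport-of-structure argument. The only points demanding care are ensuring that $F'/\tau(F)$ is genuinely Galois before one speaks of its Galois group (which follows from $\tau$ being an automorphism of the overfield $F'$ that preserves normality and separability), and verifying that $\Phi$ is well defined, that is, that the conjugate $\tau g\tau^{-1}$ indeed fixes $\tau(F)$ pointwise.
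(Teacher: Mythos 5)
Your proof is correct and follows essentially the same route as the paper's: both transport the Galois structure of $F'/F$ to $F'/\tau(F)$ by conjugation, the paper showing $\gal(F'/\tau(F))=\langle \tau\circ\rho\circ\tau^{-1}\rangle$ where $\gal(F'/F)=\langle\rho\rangle$, which is exactly your isomorphism $\Phi$ evaluated on a generator. The only cosmetic difference is that the paper checks that $F'/\tau(F)$ is Galois by explicitly realizing $F'$ as the splitting field of the separable polynomial $\tau(f)$ over $\tau(F)$, whereas you invoke the general transport-of-structure principle --- the same argument one abstraction level up.
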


\begin{proof}
Let $E=\tau(F)$ and let us show that $F'/E$ is a Galois extension. Since $F'/F$ is a Galois extension, then there is a separable polynomial $f(x)=\sum{a_{i}x^{i}} \in F[x]$ such that $F'$ is a splitting field for $f$.
Then, $$g(x)=\tau(f)(x)=\sum{\tau(a_{i})x^{i}} \in E[x]$$ is separable too.
In fact, if $b \in F'$ is a multiple root of $g$, then $a=\tau^{-1}(b)$ is a multiple root of $f$, which is not possible. 
Moreover, if $a \in F'$ is a root of $f$ then $b=\tau(a) \in F'$ is a root of $g$, and since  $\deg f = \deg g$ we have that $F'$ contains all the roots of $g$.
Thus $F'$ is a splitting field for $g$. In fact, if some proper subfield $K$ of $F'$ contains all the roots of $g$ then $K$ contains all the roots of $f$ contradicting that $F'$ is a splitting field for $f$.
Thus, $F'$ is a splitting field of a separable polynomial $g(x) \in E[x]$ and therefore $F'/E$ is Galois.
	
To prove that $F'/E$ is cyclic we will show that $\gal(F'/E)=\langle \eta \rangle$ 
with $\eta = \tau \circ \rho \circ \tau^{-1}$ where $\gal(F'/F)=\langle \rho \rangle$.
On the one hand, we have that $\eta \in \aut(F')$ and $\eta_{|_E}=id$ because if $z \in E$ then $\tau^{-1}(z) \in F$ and $\rho_{|_F}=id$, thus
	\[\eta(z)=\tau(\rho(\tau^{-1}(z)))=\tau(\tau^{-1}(z))=z.\]
So $\eta \in \gal(F'/E)$, and thus  $\langle \eta \rangle \subset \gal(F'/E)$.
On the other hand, if $\mu \in \gal(F'/E)$ then $$\tau^{-1} \circ \mu \circ \tau \in \gal(F'/F)=\langle \rho \rangle.$$ Therefore,  $\tau^{-1} \circ \mu \circ \tau=\rho^{j}$ for some $j$, and 
$\mu = \tau \circ \rho^{j} \circ \tau^{-1} = \eta^{j} \in \langle \eta \rangle$. 
Thus, we have $\gal(F'/E)=\langle \eta \rangle$, as it was to be shown.
\end{proof}

\goodbreak 
We are now in a position of proving the main result of this section.
\begin{thm} \label{ext cic} 
Let $F$ be a function field over $\ff_{q}$ containing the rational function field $\ff_q(x)$. 
Let $P_{1},\ldots,P_{n} \in \pl(F)$ be different places of $F$ and suppose $\sigma \in \aut_{\ff_q}(F)$ satisfies \eqref{conditions}, that is $\sigma(P_{i \mod n}) = P_{i+1 \mod n}$. Then, we have the following:
	\begin{enumerate}[$(a)$]
		\item There is a function field $E$ such that $\ff_{q}(x) \subset E \subset F$ and $F/E$ is a cyclic Galois extension. \sk 
		
		\item There is a place $S$ of $\ff_{q}(x)$ which splits in $F$ into the places $P_{1},\ldots,P_{n}$. \sk 
		
		\item There are places $Q_{1},\ldots,Q_{k}$ of $E$ which split in $F$ into the places $P_{1},\ldots,P_{n}$.
	\end{enumerate}
\end{thm}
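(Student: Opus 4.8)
The backbone of the argument is Proposition \ref{sigmacyclic2}, and the plan is to reduce all three assertions to the cyclic extension it produces and then to descend to $\ff_q(x)$. I would set $G=\langle\sigma\rangle$ and $E_0=F^{G}$, so that by Proposition \ref{sigmacyclic2} the extension $F/E_0$ is cyclic Galois of degree $m$ equal to the order of $\sigma$, with $n\mid m$, and the place $P=P_1\cap E_0$ decomposes in $F$ \emph{exactly} into $P_1,\dots,P_n$, all sharing the same ramification index and inertia degree. Everything below is read off from this single picture; the only genuine work is to insert the prescribed rational subfield $\ff_q(x)$, which need not be contained in $E_0$.

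For part $(a)$ I would enlarge the base field so as to capture $\ff_q(x)$. Put
\[
H=\{\tau\in\langle\sigma\rangle:\tau(x)=x\}\qquad\text{and}\qquad E=F^{H}.
\]
Every element of $H$ fixes $\ff_q$ and $x$, hence fixes $\ff_q(x)$ pointwise, so $\ff_q(x)\subseteq E\subseteq F$. Since $E$ is an intermediate field of the cyclic extension $F/E_0$ (indeed $E_0=F^{G}\subseteq F^{H}=E$), the group $\gal(F/E)=H$ is a subgroup of the cyclic group $G$ and is therefore cyclic; this is the mechanism behind Lemma \ref{galoisciclica}, and it gives that $F/E$ is cyclic Galois, proving $(a)$.

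For part $(c)$ I would push the $P_i$ down to $E$. Let $Q_1,\dots,Q_k$ be the \emph{distinct} places occurring among $P_1\cap E,\dots,P_n\cap E$. Because $F/E$ is Galois, the places of $F$ over a fixed $Q_j$ form a single $\gal(F/E)$-orbit by Lemma \ref{lema sigma}$(e)$; since $\gal(F/E)=H\le\langle\sigma\rangle$ maps $\{P_1,\dots,P_n\}$ into itself, each such fibre is contained in $\{P_1,\dots,P_n\}$, and as every $P_i$ restricts to some $Q_j$ the $k$ fibres partition $\{P_1,\dots,P_n\}$ into the $H$-orbits of the cyclic shift. This is precisely the assertion that $Q_1,\dots,Q_k$ split in $F$ into $P_1,\dots,P_n$.

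Part $(b)$ is where I expect the real difficulty. The natural candidate is $S=P_1\cap\ff_q(x)$, and the goal is to show that its fibre in $F$ is \emph{precisely} $\{P_1,\dots,P_n\}$. The obstruction is that $\sigma$ fixes $\ff_q$ but need \emph{not} stabilise $\ff_q(x)$: writing $P_i=\sigma^{i-1}(P_1)$ and using Lemma \ref{lema sigma}$(b)$, the restriction of $P_i$ to $\ff_q(x)$ is governed by $v_{P_1}\circ\sigma^{-(i-1)}$, which naturally lives on $\sigma^{-(i-1)}(\ff_q(x))$ rather than on $\ff_q(x)$ itself, so $P_i\cap\ff_q(x)=S$ is not automatic. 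On the other hand, the places $Q_1,\dots,Q_k$ of part $(c)$ do share a common restriction $P$ to $E_0$, since $Q_j\cap E_0=P_i\cap E_0=P$. My plan is to transfer this common-restriction property from $E_0$ down to $\ff_q(x)$ through the field $E$, which contains both, and thereby produce a single place $S$ below all the $P_i$ whose fibre is exactly $\{P_1,\dots,P_n\}$. Making this transfer rigorous is the technical heart, and I expect it to require either replacing $x$ by a $\sigma$-adapted generator of the bottom rational subfield or a weak-approximation argument furnishing a uniformizer of $P$ that already lies in $\ff_q(x)$.
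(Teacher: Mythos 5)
Your parts $(a)$ and $(c)$ are correct and, modulo presentation, coincide with the paper's own construction: your field $E=F^{H}$ with $H=\{\tau\in\langle\sigma\rangle:\tau(x)=x\}$ is exactly the paper's compositum $E''=E'\ff_q(x)$, where $E'=F^{\langle\sigma\rangle}$, because $H=\gal(F/E'')$; and your identification of the fibres over $Q_1,\ldots,Q_k$ with the $H$-orbits in $\{P_1,\ldots,P_n\}$, via Lemma \ref{lema sigma}$(e)$, is the same mechanism by which the paper proves $(c)$, except that the paper additionally computes, using multiplicativity of $ef$ along $E'\subset E''\subset F$, that each $Q'_j$ carries exactly $n/k$ of the $P_i$ (a count the theorem does not require). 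The paper then replaces $E''$ by $E=\sigma^{-1}(E'')$, invoking Lemma \ref{galoisciclica} to keep cyclicity; your argument shows that this twist, introduced only to match the construction of $S$ in $(b)$, is unnecessary for $(a)$ and $(c)$.

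The genuine gap is part $(b)$: you offer a plan rather than a proof, and the obstruction you correctly identify is in fact fatal, so no completion along your lines (or any other) exists — statement $(b)$ is false as written. The fibre in $F$ of a place of $\ff_q(x)$ has at most $[F:\ff_q(x)]$ elements, so $(b)$ would force $n\le[F:\ff_q(x)]$, which is not among the hypotheses; already for $F=\ff_q(x)$ itself (e.g.\ Example \ref{ex omega code}) every fibre is a singleton while $n\ge 2$. For a proper containment, take $q$ odd, $F=\ff_q(y)$, $x=y^2$, $n\mid q-1$ odd with $n\ge 3$, $\sigma(y)=\omega^{-1}y$ with $\omega$ a primitive $n$-th root of unity, and $P_i$ the zero of $y-\omega^{i-1}$: then \eqref{conditions} holds, yet $P_i$ lies over the zero of $x-\omega^{2(i-1)}$ in $\ff_q(x)$, and these $n$ places are pairwise distinct, so not even a common place $S$ below all the $P_i$ exists. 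Your proposed ``transfer through $E$'' breaks at exactly this point: places of $E$ over the common place $P=P_1\cap E_0$ of $E_0$ can restrict to distinct places of $\ff_q(x)$. You should also know that the paper's own proof of $(b)$ silently crosses the same chasm: it rests on the inclusion $\ff_q(\sigma(x))\subset E'$, which would require $\sigma^2(x)=\sigma(x)$, i.e.\ $\sigma(x)=x$, and on the unproved inclusion $\ff_q(x)\subset\sigma^{-1}(E'')$, so the place $R=P'\cap\ff_q(\sigma(x))$ is not defined in general. What does survive is a weakened $(b)$: for any nonconstant $z\in E'$ the rational subfield $\ff_q(z)\subset E'$ is fixed pointwise by $\sigma$, and all of $P_1,\ldots,P_n$ lie over the single place $P'\cap\ff_q(z)$, although its fibre in $F$ may strictly contain $\{P_1,\ldots,P_n\}$. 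In short: your $(a)$ and $(c)$ stand, your diagnosis of $(b)$ is exactly right, and the missing step is a defect of the theorem and of the paper's proof rather than of your strategy — but as a proof of the statement as given, your attempt is incomplete.
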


\begin{proof}
In Figure 1 we can see the entire situation we want to prove.
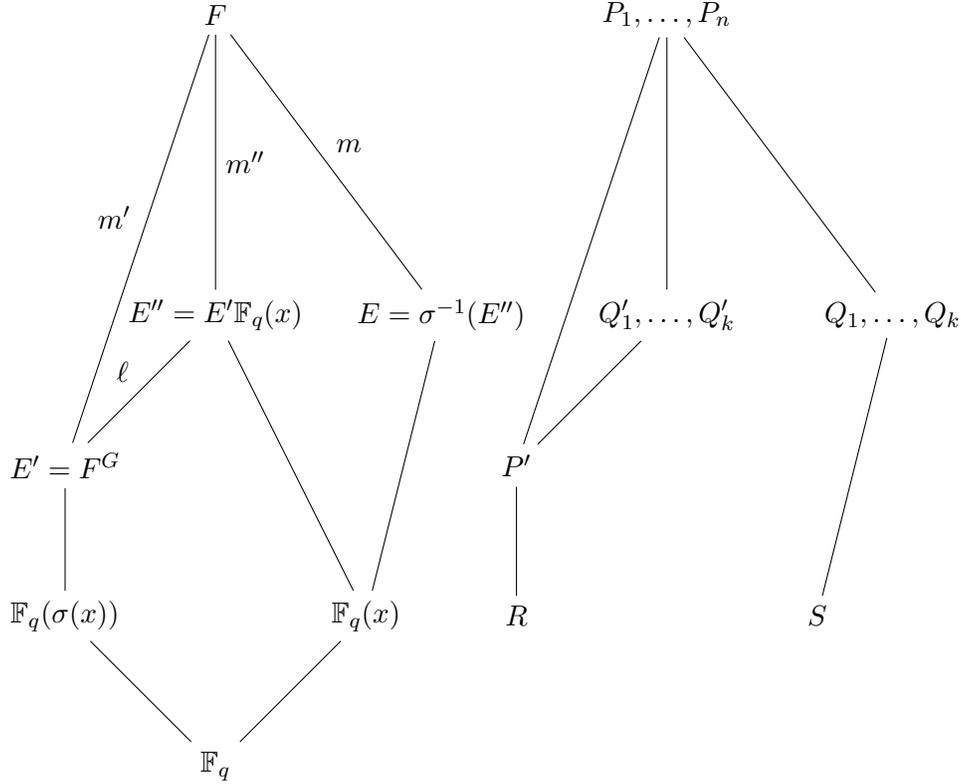
\begin{figure}[H]
		\begin{center} 
			\begin{tikzpicture}[scale=0.7, node distance = 2cm, auto];
			\node (Fq) {$\ff_{q}$};
			\node (Fqsigmax) [above of=Fq, left of=Fq] {$\ff_{q}(\sigma(x))$};
			\node (Fqx) [above of=Fq, right of=Fq] {$\ff_{q}(x)$};
			\node (E') [above of=Fqsigmax] {$E'=F^{G}$};
			\node (E'') [above of=Fq, node distance = 6cm] {$E''=E'\ff_{q}(x)$};
			\node (E) [right of=E'', node distance = 3cm] {$E=\sigma^{-1}(E'')$};
			\node (F) [above of=E'', node distance = 4cm] {$F$};
			\draw[-] (Fq) to node {} (Fqsigmax);
			\draw[-] (Fq) to node {} (Fqx);
			\draw[-] (Fqsigmax) to node {} (E');
			\draw[-] (Fqx) to node {} (E'');
			\draw[-] (Fqx) to node {} (E);
			\draw[-] (E') to node {$m'$} (F);
			\draw[-] (E') to node {$\ell$} (E'');
			\draw[-] (E'') to node [swap]{$m''$} (F);
			\draw[-] (E) to node [swap]{$m$} (F);
			\node (nada) [right of = Fq, node distance = 6 cm] {};
			\node (R) [above of=nada, left of=nada] {$R$};
			\node (S) [above of=nada, right of=nada] {$S$};
			\node (P') [above of=R] {$P'$};
			\node (Q') [above of=nada, node distance = 6cm] {$Q'_{1},\ldots,Q'_{k}$};
			\node (Q) [right of=Q', node distance = 3cm] {$Q_{1},\ldots,Q_{k}$};
			\node (P) [above of=Q', node distance = 4cm] {$P_{1},\ldots,P_{n}$};
			\draw[-] (R) to node {} (P');
			\draw[-] (S) to node {} (Q);
			\draw[-] (P') to node {} (P);
			\draw[-] (P') to node {} (Q');
			\draw[-] (Q') to node {} (P);
			\draw[-] (Q) to node {} (P);
			\end{tikzpicture}
			\caption{Theorem \ref{ext cic} in a picture}
		\end{center}
	\end{figure}

Let $G$ be the subgroup of $\aut_{\ff_q}(F)$ given by $G=\langle \sigma \rangle$ and let $E'=F^G$ be the fixed field of $F$ by $G$. Then $F/E'$ is a cyclic Galois extension with Galois group $G$ of order $m'=|\sigma|$. Furthermore $E'/\ff_{q}$ is a function field over $\ff_q$ and since $\ff_q(x)\subset F$ we then have $\ff_{q}(\sigma(x)) \subset E'$.
	
Consider the place $P'=P_{1}\cap E' \in \pl(E')$. Since $F/E'$ is Galois, $\sigma(P_{i})=P_{i+1}$ for $i=1, \ldots , n-1$ and $\sigma(P_n)=P_1$, then $P_{1},\ldots ,P_{n}$ are all the places of $F$ lying above $P'$ so that for $i=1,\ldots ,n$ we have 
$ e(P_{i}\,|\,P')f(P_{i}\,|\,P')=\frac{m'}{n}$.
	
Now let us consider the composite field $E''=E'\ff_{q}(x)$. Note that $E' \subset E'' \subset F$ and $F/E'$ is a cyclic Galois extension with Galois group $G$. Therefore $F/E''$ is a cyclic Galois extension with Galois group $T=\langle \tau \rangle$, where $\tau = \sigma^{\ell}$ for some $1 \leq \ell \leq m'$, and $[F:E'']=m''=|\tau|$. Furthermore, $\ell m''=m'$, $E''=F^{T}$, and $\ff_{q}(x) \subset E''$. Also $E''/E'$ is a cyclic Galois extension and $[E'':E']=\ell$ (see Corollary 1.11 in \cite{lang}).
	
Note that the places of the form $Q_i'=P_{i}\cap E'' \in \pl(E'')$ are lying above $P'$. In fact,
	\[ Q_i'\cap E'=(P_{i}\cap E'') \cap E' = P_{i} \cap (E''\cap E')=P_i \cap E'=P'.  \]
Since $P_{1}, \ldots , P_{n}$ are all the places of $F$ lying above $P'$, we have that 
$Q_1', \ldots, Q_n'$ are all the places of $E''$ lying above $P'$ (although they may not be pairwise different).
	
Let $Q'_{1}, \ldots , Q'_{k}$ be all the different places of $E''$ lying above $P'$ (hence $k\le n$). Since $E''=F^{T}$, the set of places of $F$ lying above each $Q'_{j}$ is determined by the orbit of the action of $T$ over the set $\{P_{1}, \ldots , P_{n}\}$. Then $P_{i} \mid Q'_{j}$ if and only if $\tau(P_{i}),\tau^{2}(P_{i}),\ldots,\tau^{m'}(P_{i})$ are above $Q'_{j}$.
	
Since $F/E'$, $F/E''$ and $E''/E'$ are cyclic extensions we see that
	\begin{eqnarray*}
		e(P_{i}\,|\,P')f(P_{i}\,|\,P') & = & \frac{m'}{n}, \\
		e(P_{i}\,|\,Q'_{j})f(P_{i}\,|\,Q'_{j}) & = & \frac{m''}{r_{j}}, \\
		e(Q'_{j}\,|\,P')f(Q'_{j}\,|\,P') & = & \frac{\ell}{k}, \\
	\end{eqnarray*}
where $r_{j}$ is the number of places of $F$ lying above $Q'_{j}$. Then
	\[ \frac{m'}{n} = e(P_{i}\,|\,P')f(P_{i}\,|\,P') = e(P_{i}\,|\,Q'_{j})f(P_{i}\,|\,Q'_{j})e(Q'_{j}\,|\,P')f(Q'_{j}\,|\,P') = \frac{m''}{r_{j}} \frac{\ell}{k} = \frac{m'}{r_{j}k} .\]
That is $n=r_jk$. Therefore, for every $1 \leq j \leq k$, we have that there are $r_j=\frac nk$ places of $F$ lying above $Q'_{j}$.

Let $R=P'\cap \ff_{q}(\sigma(x))$. Then $R$ is the place of $\ff_{q}(\sigma(x))$ lying below $P'$ and $S=\sigma^{-1}(R)$ is a place of $\ff_{q}(x)$. Now consider the field 
		$$E= \sigma^{-1}(E'').$$
Since $ R = P' \cap \ff_{q}(\sigma(x)) = Q'_{j}  \cap \ff_{q}(\sigma(x)) $ for any $j$, we then have
	\[S=\sigma^{-1}(R)=\sigma^{-1}(Q'_{j}\cap\ff_{q}(\sigma(x)))=\sigma^{-1}(Q'_{j})\cap\ff_{q}(x)=Q_{j}\cap\ff_{q}(x),\]
with $Q_{j}=\sigma^{-1}(Q'_{j}) \in \pl(E)$. Thus $Q_1,\ldots,Q_k$ are  all the places of $E$ lying above $S$. 
Moreover, each place $Q_j$ splits in $F$ into $\frac nk$ places of the set $\{P_1,\ldots,P_n\}$. This proves ($b$) and ($c$).
	
Finally, Lemma \ref{galoisciclica} implies that $F/E$ is cyclic, since $F/E''$ is cyclic, thus proving $(a)$, and the result follows.
\end{proof}

A particular nice situation occurs when $n$ is a prime number, as we show next.

\begin{coro} If $n=p$ is a prime number in Theorem \ref{ext cic}, then one and only one of following statements holds:
	\begin{enumerate}[$(a)$]
		\item There are exactly $p$ places $Q_{1}, \ldots , Q_{p}$ of $E$ lying below the places $P_1,\ldots,P_p$ of $F$. In other words, each place of $Q_i$ of $E$ given in part (c) of Theorem \ref{ext cic} lies below of only one of the places $P_1,\ldots, P_p$.   
		
		\item There is only one place of $E$ lying below the places $P_{1}, \ldots , P_{p}$. 
	\end{enumerate}
\end{coro}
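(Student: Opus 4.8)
The plan is to read off both alternatives directly from the numerical relation already established in the proof of Theorem \ref{ext cic}. Recall that there we obtained the identity $n = r_j k$, where $Q_1, \ldots, Q_k$ are the distinct places of $E$ lying below the places $P_1, \ldots, P_n$ of $F$ (equivalently, the places of $E$ lying above $S$, as in part $(c)$ of the theorem), and $r_j = n/k$ is the number of the $P_i$ lying above each $Q_j$. Since this number is independent of $j$, the key structural fact is simply that $k$ is a positive divisor of $n$.

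Next I would specialize to $n = p$ prime. As $k \mid p$, the only possibilities are $k = 1$ or $k = p$. If $k = p$, then $r_j = p/k = 1$ for every $j$, so there are exactly $p$ distinct places $Q_1, \ldots, Q_p$ of $E$ and each one lies below precisely one of the places $P_1, \ldots, P_p$; this is exactly alternative $(a)$. If instead $k = 1$, then there is a single place $Q_1$ of $E$ lying below all the $P_i$, with $r_1 = p$ places of $F$ above it; this is alternative $(b)$.

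Finally, to justify the ``one and only one'' phrasing, I would observe that the two alternatives are both exhaustive and mutually exclusive: exhaustiveness is precisely the statement $k \in \{1, p\}$, while exclusivity holds because $p$ is prime, hence $p \geq 2$, so that $1 \neq p$ and the two cases cannot occur simultaneously. I do not expect any genuine obstacle here; the whole content is already packaged in the relation $n = r_j k$ proved in Theorem \ref{ext cic}, and the corollary is just the observation that a prime number has exactly two positive divisors.
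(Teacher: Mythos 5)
Your proposal is correct and is exactly the argument the paper intends: the corollary is stated without proof precisely because it follows immediately from the relation $n = r_j k$ (equivalently $k \mid n$) established in the proof of Theorem \ref{ext cic}, and specializing to $n = p$ prime forces $k \in \{1, p\}$, which yields alternatives $(a)$ and $(b)$ respectively. Your additional remark on mutual exclusivity (since $p \geq 2$, the cases $k = 1$ and $k = p$ cannot coincide) correctly justifies the ``one and only one'' phrasing.
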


\bibliographystyle{plain}
\bibliography{referenciasAG}
\end{document}